\documentclass[12pt,longbibliography]{article}

\usepackage[utf8]{inputenc} % Allow direct use of accents such as á é ñ.
\usepackage[T1]{fontenc}

\usepackage{graphicx}
\usepackage{amsmath}
\usepackage{amsfonts}
\usepackage{mathrsfs}           % \mathscr font.
\usepackage{amsthm}
\usepackage{amssymb}
\usepackage{color}
\usepackage{enumerate}

\usepackage[a4paper, margin=2.7cm]{geometry}

\usepackage[colorlinks=true,linkcolor=blue,citecolor=blue,urlcolor=blue,breaklinks]{hyperref}

\usepackage{bbm}

\usepackage{breakurl}
\usepackage{url}
\def\C{\mathbb{C}}
\def\N{\mathbb{N}}
\def\R{\mathbb{R}}

\def\d{\,\mathrm{d}}

\newtheorem{thm}{Theorem}[section]

\newtheorem{lem}[thm]{Lemma}
\newtheorem{prp}[thm]{Proposition}

\theoremstyle{definition}
\newtheorem{dfn}[thm]{Definition}
\theoremstyle{remark}
\newtheorem{rem}[thm]{Remark}
\theoremstyle{example}

\title{Intrinsic Ultracontractivity for a class of Schrödinger Semigroups in $\mathrm{L}^{2}\left( \R^{n} \right)$
by Logarithmic Sobolev inequalities}

\author{Christoph Schwerdt$^1$, Alexander Mill$^2$ and Dirk Hundertmark$^3$}
\date{%
	$^1$ Institute of Mathematics, University of Rostock,\\
	Ulmenstra\ss e 69, 18 057 Rostock, Germany; \\%
	\textit{E-mail address}: \texttt{\href{mailto:christoph.schwerdt@uni-rostock.de}{christoph.schwerdt@uni-rostock.de}}\\
	\ \\
	$^2$ Institute of Mathematics, University of Rostock,\\
	Ulmenstra\ss e 69, 18 057 Rostock, Germany; \\%
	\textit{E-mail address}: \texttt{\href{mailto:alexander.mill@uni-rostock.de}{alexander.mill@uni-rostock.de}}\\
	\ \\
	$^3$ Department of Mathmatics, Karlsruhe Institute of Technology (KIT), \\
	76 128 Karlsruhe, Germany; \\%
	\textit{E-mail address}: \texttt{\href{mailto:dirk.hundertmark@kit.edu}{dirk.hundertmark@kit.edu}}\\
	\ \\
    \today
}

\begin{document}

\maketitle

\begin{abstract}
In the first part of this article we present a growth condition on the potential $q$ in the Schrödinger operator 
$H=-\Delta + q(x)$ in $\mathrm{L}^{2}\left( \R^{n} \right)$ 
that implies \textit{Rosen inequalities} for the ground state $\varphi$ of $H$, i.e.
$$
\forall \varepsilon > 0 \exists \gamma(\varepsilon) > 0 \ : \ - \ln\left( \varphi(x) \right) \leq \varepsilon q(x) + \gamma(\varepsilon).
$$ 
While these inequalities are not particularly interesting in themselves, they
offer Logarithmic Sobolev inequalities which are absolutely essential to prove an \textit{intrinsic ultracontractivity} of 
the associated Schrödinger semigroup $\mathrm{e}^{-tH}$, i.e.
$$
\forall t>0 \exists C_{t} > 0 \ : \ \left| \mathrm{e}^{-tH} u (x) \right| \ \leq \ C_{t} \varphi(x) \| u \|_{2}
$$
holds for every $u \in \mathrm{L}^{2}\left( \R^{n} \right)$ almost everywhere in $\R^{n}$ which we prove in the second 
part of this article. For proving Rosen inequalities we focus on solving a radial Schrödinger inequality and use
\textit{Agmon's version of the comparison principle} and \textit{Young's inequality for increasing functions}.
We follow the classic method proving intrinsic ultracontractivity of $\mathrm{e}^{-tH}$ by using
\textit{weighted Sobolev function spaces}, \textit{weighted Schrödinger semigroups} and \textit{Logarithmic Sobolev inequalities}.\\
\end{abstract}

\tableofcontents

\section{Introduction}

\subsection{Intrinsic ultracontractivity in $\mathrm{L}^{2}(\R^{n})$}

The Hamiltonian corresponding to a quantum system is given by the formal operator
\begin{equation}
H =  -\Delta + q(x)
\end{equation}
in $\mathrm{L}^{2}(\R^{n})$ in $n \geq 3$ dimensions with $q \in \mathrm{L}^{1}_{loc}\left( \R^{n} \right)$. We focus on
potentials $q$ being non-negative almost everywhere in $\R^{n}$ such that there exists a unique strictly positive 
eigenfunction $\varphi$ corresponding to the lowest eigenvalue $E_{0}$ of $H$ which is the system's \textit{ground state}.
We present a growth condition of $q$ which implies \textit{Rosen inequalities}, i.e.
\begin{equation}
\forall \varepsilon > 0 \ \exists \gamma(\varepsilon) > 0 \ : \ - \ln \left( \varphi(x) \right) \ \leq \ \varepsilon q(x) + \gamma(\varepsilon)
\end{equation}
for almost every $x \in \R^{n}$. These are not particularly interesting in themselves, but imply 
\textit{Logarithmic Sobolev inequalties} which are absolutely essential to prove 
\textit{intrinsic ultracontractivity} of the Schrödinger semigroup $\mathrm{e}^{-tH}$, i.e.
\begin{equation}
\forall t > 0 \exists C_{t} > 0 \ : \ \left| \mathrm{e}^{-tH}u (x)  \right| \ \leq \ C_{t} \varphi(x) \| u \|_{2}
\end{equation}
for every $u \in \mathrm{L}^{2}(\R^{n})$ almost everywhere in $\R^{n}$. In this case the asymptotic behaviour 
of $\mathrm{e}^{-tH}u$ is dominated by the ground state $\varphi$ at every time $t>0$. In particular, for a normed 
eigenfunction $v$ of $H$ we conclude to 
\begin{equation}
\exists C_{\lambda} > 0 \ : \ |v(x)| \leq C_{\lambda} \varphi(x)
\end{equation}
almost everywhere in $\R^{n}$ where $\lambda$ is the associated eigenvalue of $v$. Mind that in quantum physics
eigenfunctions are characterized as a probability density function of an electron's position at a certain energy level 
$\lambda$ of the system $H$.
But apart from quantum mechanics we can interpret intrinsic ultracontractivity as perturbation result of the generator $-\Delta$ 
of the free Schrödinger semigroup given by
\begin{equation}
\mathrm{e}^{t\Delta}u (x) = \frac{1}{(4 \pi t)^{\frac{n}{2}}} 
\int_{\R^{n}} \mathrm{e}^{-\frac{|x-y|^{2}}{4t}} u(y) \ \mathrm{d}y
\end{equation} 
for $u \in \mathrm{L}^{2}(\R^{n})$. Due to the Gaussian integral kernel $\mathrm{e}^{t\Delta}$ is 
a contraction in $\mathrm{L}^{p}(\R^{n})$ for every $p \in [1,\infty]$ and further maps $\mathrm{L}^{p}(\R^{n})$ into 
$\mathrm{L}^{q}(\R^{n})$ continuously for $1 \leq p < q \leq \infty$.
For the case of $H=-\Delta + q(x)$ we cite the following characterization of 
intrinsic ultracontractivity as Lemma 4.2.2 from page 110 in \cite{Davies07}. \\

\begin{lem} The Schrödinger semigroup $\mathrm{e}^{-tH}$ is intrinsic ultracontractive if and only if both of the following 
conditions are satisfied for every time $t > 0$
\begin{enumerate}[i)]
\item $\mathrm{e}^{-tH}u (x) = \int_{\R^{n}} \ k(t,x,y) u(y) \d y$ almost everywhere in $\R^{n}$ for 
$u \in \mathrm{L}^{2}(\R^{n})$ 
\item $\exists C_{t} > 0 \ : \ 0 \leq k(t,x,y) < C_{t} \ \varphi(x) \ \varphi(y)$ almost everywhere in $\R^{2n}$\\
\end{enumerate}
\end{lem}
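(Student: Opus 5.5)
The plan is to prove the two directions separately, working with the semigroup property $\mathrm{e}^{-tH}=\mathrm{e}^{-\frac t2 H}\mathrm{e}^{-\frac t2 H}$, its self-adjointness, and its positivity. Positivity holds since $q\geq 0$: by the Trotter product formula, or the Feynman--Kac formula, $\mathrm{e}^{-tH}$ is dominated pointwise by $\mathrm{e}^{t\Delta}$ and is positivity preserving.

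\emph{Sufficiency.} Assume i) and ii). For $u\in\mathrm{L}^{2}(\R^{n})$ we have, almost everywhere,
\[
|\mathrm{e}^{-tH}u(x)| \leq \int_{\R^n} k(t,x,y)|u(y)|\d y \leq C_t\,\varphi(x)\int_{\R^n}\varphi(y)|u(y)|\d y \leq C_t\,\varphi(x)\,\|\varphi\|_2\|u\|_2 .
\]
The last step is Cauchy--Schwarz. Since $\varphi$ is normalized, this is exactly intrinsic ultracontractivity.

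\emph{Necessity.} Assume $|\mathrm{e}^{-tH}u(x)|\leq C_t\varphi(x)\|u\|_2$ for all $t>0$.

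First, fix $x$ and consider the linear functional $u\mapsto \mathrm{e}^{-tH}u(x)$. The bound shows it is bounded on $\mathrm{L}^2$ with norm at most $C_t\varphi(x)$. To make the point evaluation rigorous, I would:
\begin{itemize}
\item choose a countable dense set of $u$'s and a common null set outside of which the bound holds;
\item extend by linearity and continuity, so that the map $u\mapsto \mathrm{e}^{-tH}u(x)$ is well defined for almost every $x$.
\end{itemize}
Riesz representation then gives $g_{t,x}\in\mathrm{L}^2$ with $\mathrm{e}^{-tH}u(x)=\langle g_{t,x},u\rangle$ and $\|g_{t,x}\|_2\leq C_t\varphi(x)$. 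Joint measurability of $(x,y)\mapsto g_{t,x}(y)$ follows by the standard Dunford--Pettis type argument, since $\mathrm{e}^{-tH}$ is a bounded operator from $\mathrm{L}^2$ into the weighted space $\varphi\,\mathrm{L}^\infty$.

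Second, I would apply this at time $t/2$ and define
\[
k(t,x,y)=\langle g_{t/2,x},g_{t/2,y}\rangle .
\]
Using self-adjointness,
\[
\mathrm{e}^{-tH}u(x)=\langle g_{t/2,x},\mathrm{e}^{-\frac t2H}u\rangle=\int_{\R^n}\Big(\int_{\R^n} g_{t/2,x}(z)\,g_{t/2,y}(z)\d z\Big)u(y)\d y .
\]
Fubini is justified by $\int\!\!\int |g_{t/2,x}(z)||g_{t/2,y}(z)||u(y)|\d z\d y\leq C^2\varphi(x)\int\varphi|u|<\infty$. This gives i).

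For ii), Cauchy--Schwarz yields
\[
|k(t,x,y)|\leq \|g_{t/2,x}\|_2\|g_{t/2,y}\|_2\leq C_{t/2}^2\,\varphi(x)\varphi(y).
\]
Nonnegativity of $k$ follows from positivity preservation of $\mathrm{e}^{-tH}$: for $u\geq0$ we get $\int k(t,x,\cdot)u\geq 0$ almost everywhere, so $k\geq 0$ almost everywhere in $\R^{2n}$. Replacing the constant by $2C_{t/2}^2$ gives the strict inequality.

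\emph{Main obstacle.} The main difficulty is the measure-theoretic step: making the pointwise evaluation of the $\mathrm{L}^2$-class $\mathrm{e}^{-tH}u$ into a bounded functional for almost every fixed $x$, uniformly over all $u$, and ensuring $k$ is jointly measurable. I would handle this via the separable dense set argument above, or by citing the kernel theorem for operators bounded from $\mathrm{L}^1$ to $\mathrm{L}^\infty$ after conjugating with the ground-state transform $u\mapsto \varphi^{-1}u$.
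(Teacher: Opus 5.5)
Your proof is correct. The paper does not prove this lemma itself; it quotes it as Lemma 4.2.2 of \cite{Davies07}. So the only comparison available is with the standard route behind that citation, which is also the weighted framework the paper sets up in Section~\ref{Definition_h_H}. On that route one works in three steps. First, intrinsic ultracontractivity is recast as boundedness of the ground-state transformed semigroup $\mathrm{e}^{-t\tilde{H}}$ from $\mathrm{L}^{2}_{\mu}(\R^{n})$ to $\mathrm{L}^{\infty}(\R^{n})$. Second, one uses the general fact that such an ultracontractive self-adjoint positive semigroup has a kernel $0\leq\tilde{k}(t,x,y)\leq c_{t/2}^{2}$ with respect to $\mu$; this comes from composing the dual bound $\mathrm{L}^{1}_{\mu}\to\mathrm{L}^{2}_{\mu}$ with $\mathrm{L}^{2}_{\mu}\to\mathrm{L}^{\infty}$ at time $t/2$. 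Third, one transforms back via $k(t,x,y)=\varphi(x)\tilde{k}(t,x,y)\varphi(y)$.

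You carry out the same factorization $\mathrm{e}^{-tH}=\mathrm{e}^{-\frac{t}{2}H}\mathrm{e}^{-\frac{t}{2}H}$ directly in unweighted $\mathrm{L}^{2}(\R^{n})$, keeping the weight inside the Riesz bound $\|g_{t,x}\|_{2}\leq C_{t}\varphi(x)$. This avoids the ground-state transform and gives the explicit kernel $\langle g_{t/2,x},g_{t/2,y}\rangle$ with constant $C_{t/2}^{2}$. The weighted route instead buys a reduction to a reusable kernel lemma for ultracontractive semigroups.

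Two details are worth making explicit.

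\emph{Measurability.} The issue you flag as the main obstacle is milder than it looks. Your bound gives $\int_{\R^{n}}\|g_{t,x}\|_{2}^{2}\d x\leq C_{t}^{2}\|\varphi\|_{2}^{2}$, so $\mathrm{e}^{-tH}$ is Hilbert--Schmidt. It therefore has a kernel in $\mathrm{L}^{2}(\R^{2n})$ whose $x$-sections are, up to conjugation, the $g_{t,x}$, and no Dunford--Pettis argument is needed. Alternatively, $(x,y)\mapsto\langle g_{t/2,x},g_{t/2,y}\rangle$ is jointly measurable as soon as $x\mapsto g_{t/2,x}$ is weakly measurable, which your construction provides. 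This follows from Pettis' theorem in the separable space $\mathrm{L}^{2}(\R^{n})$.

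\emph{Conjugates.} Your formula $\int g_{t/2,x}(z)g_{t/2,y}(z)\d z$ is written without complex conjugates. It tacitly uses that the kernel of $\mathrm{e}^{-\frac{t}{2}H}$ is real and symmetric, which follows from self-adjointness together with the reality of the semigroup (Lemma~\ref{positiv_H}). With self-adjointness alone you get the same formula with one factor conjugated, which serves equally well.
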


\subsection{Historical context}

First results of intrinsic hypercontractivity and intrinsic ultracontractivity of Schrödinger semigroups date back to the 1970s
as mentioned on page 336 in \cite{DaviesSimon84}. However until the mid-1980s 
there was a folk belief that intrinsic ultracontractivity of Schrödinger semigroups in $\mathrm{L}^{2}(\R^{n})$ would not occur. 
A simple example in that regard is the quantum harmonic oscillator $H=-\Delta + |x|^{2}$ in $\mathrm{L}^{2}(\R^{n})$ where it 
is very easy to see that $\mathrm{e}^{-tH}$ is not intrinsic ultracontractive by 
comparing the asymptotical behaviour of eigenfunctions to the ground state. 
However, in \cite{DaviesSimon84}  intrinsic ultracontractivity of $\mathrm{e}^{-tH}$ is shown
for potentials close to $|x|^{2}$ like in
$H=-\Delta + |x|^{\beta}$ or in $H=-\Delta + |x|^{2} \left( \ln(|x|+2)\right)^{\beta}$ for any $\beta > 2$. 
Even more examples are given in \cite{Davies07} which provides 
an in-depth study on intrinsic ultracontractivity of Schrödinger semigroups in $\mathrm{L}^{2}(\R^{n})$.
For bounded domains $D \subset \R^{n}$ with a sufficiently smooth boundary Banuelos  \cite{BANUELOS1991} proved 
intrinsic ultracontractivity of Schrödinger semigroups in $\mathrm{L}^{2}(D)$ in 1990. 
In the 2000s B. Alziary and P. Takáč returned to the original studies of intrinsic ultracontractivity 
in $\mathrm{L}^{2}(\R^{n})$ in \cite{AlziaryTakac09}. A growth condition of $q$ was presented which implies an 
intrinsic ultracontractivity of $\mathrm{e}^{-tH}$. For the frist time the Schrödinger potential $q$ itself did not have to be 
radial anymore. Instead, 
it is squeezed inbetween two radial bounding auxiliary potentials such that
\begin{equation}\label{Takac_boundary}
\left( \int_{r_{0}}^{|x|} Q(t)^{\frac{1}{2}} \d t \right) \ P \left[ \ln \left( \int_{r_{0}}^{|x|} Q(t)^{\frac{1}{2}} \d t  \right) \right]
\leq q(x) \leq Q\left( |x| \right)
\end{equation}
holds for every $|x| \geq r_{0}$ for auxiliary functions $P$ and $Q$ where $Q$ satisfies
\begin{equation}\label{condition_Q_Hartman_Wintner}
\int_{r_{0}}^{\infty} \left| \frac{\d}{\d t} \left( Q(t)^{\frac{1}{2}} \right)  \right|^{\gamma} Q(t)^{\frac{1}{2}} \d t < \infty
\end{equation}
for a constant $\gamma \in (1,2]$ and $P$ satisfies $\int_{0}^{\infty} P(t)^{-1} \d t < \infty$. Examples of $Q$ are given on 
page 4105 by $|x|^{2+\delta}$, $|x|^{2} \left( \ln |x| \right)^{2+\delta}$, 
$|x|^{2}  \left( \ln |x| \right)^{2} \left( \ln \ln |x| \right)^{2+\delta}$ et cetera for $\delta > 0$.

\subsection{Motivation of this article}

We follow \cite{AlziaryTakac09} in terms of using a comparison principle, Young's inequality for increasing functions and
the classical use of Logarithmic Sobolev inequalities to prove an intrinsic ultracontractivity of $\mathrm{e}^{-tH}$. 
However we won't need condition (\ref{condition_Q_Hartman_Wintner}). Instead we demand 
$$
\frac{Q^{\prime}(r)}{Q(r)^{\frac{3}{2}}} \ \to \ 0
$$
for $r \to \infty$ which is easier to verify for concrete choices of $Q$. Furthermore we focus 
on the radial Schrödinger inequality 
\begin{equation}\label{radial_Schroedinger_inequality}
- \psi^{\prime \prime} (r) - \frac{n-1}{r} \psi^{\prime}(r) + Q(r)\psi(r) \ \leq \ 0 
\end{equation}
in contrast to the radial Schrödinger equation (36) on page 4112 in \cite{AlziaryTakac09}. It is much easier to find 
solutions to (\ref{radial_Schroedinger_inequality}) which are sufficient in terms of Rosen inequalities. Therefore
our auxiliary function $\psi$ is much simpler than Lemma 4.4 in \cite{AlziaryTakac09}.\\   
Furthermore, we present a specific but reasonable choice of $P$ in (\ref{Takac_boundary}) to guarantee a better 
understanding. We use the auxiliary function
$f \colon \R \to [0,\infty)$ for $P$ defined by
$$
f(q)=\left\{\begin{array}{ll} 
	f_{k,m-1}(q), & q \geq r_{0} \\
         f_{k,m-1}\left( r_{0} \right)\mathrm{e}^{\frac{q}{r_{0}}-1}, & q < r_{0}
\end{array}\right. 
$$
where $r_{0} = \ln \left( \int_{0}^{R_{m}} Q(t)^{\frac{1}{2}} \ \mathrm{d}t \right) > 0$ and
$
f_{k, m} (t) = \left( \ln^{(m)}(t) \right)^{k} \ \prod_{p=0}^{m-1} \ln^{(p)}(t)
$ 
for $k>1$, $m \in \N$ and $t > R_{m}$ with $R_{m} > 0$ chosen large that $\ln^{(m)}(t) > 0$ is true.\footnote{
Note that $\ln^{(0)}(t) = t$ and $\ln^{(p)}(t) =\underbrace{\ln(\ln(\dots(\ln(t)) \dots))}_{p-\text{times}}$}\\

\section{Definitions and preparations for Rosen inequalities}\label{definitions_and_preparations}

We consider dimensions $n \geq 3$ and write $\mathrm{L}^{2} \left( \R^{n} \right)$ 
for the set of complex-valued and measurable functions $u$ on $\R^{n}$ with $\int_{\R^{n}} |u(x)|^{2} \ \mathrm{d}x < \infty$.
For real-valued functions in $\mathrm{L}^{2}\left( \R^{n} \right)$ we explicitly write $\mathrm{L}^{2} \left( \R^{n}, \R \right)$.
Throughout this article the Schödinger potential $q \colon \R^{n} \to \R$ is continuous and 
non-negative almost everywhere in $\R^{n}$ with $q(x) \to \infty$ for $|x| \to \infty$.\\

We define a subspace $D(h)$ as the form domain of $h$ by
\begin{equation}
D(h) = \left\{  u \in H^{1}\left( \R^{n} \right) \ | \ q^{\frac{1}{2}}u \in  \mathrm{L}^{2}(\R^{n}) \right\}  \subset 
\mathrm{L}^{2}\left( \R^{n} \right)
\end{equation}
where $H^{1}\left( \R^{n} \right)$ denotes the Sobolev space of weakly differentiable functions in 
$\mathrm{L}^{2} \left( \R^{n} \right)$
whose weak derivatives are contained in $\mathrm{L}^{2} \left( \R^{n} \right)$. Please mind that 
$C_{c}^{\infty}\left( \R^{n} \right) \subseteq D(h)$ is satisfied. 
For $u,v \in D(h)$ we define a sesquilinear form $h$ by
\begin{equation}
h(u,v) = \left\{ \sum_{j = 1}^{n} \langle \partial_{j} u , 
\partial_{j} v \rangle \right\} + \langle q^{\frac{1}{2}}u, q^{\frac{1}{2}}v \rangle
\end{equation}
where $\langle \cdot , \cdot \rangle$ denotes the inner product of $\mathrm{L}^{2}(\R^{n})$. 
The form $h$ induces a norm $\| u \|_{h} = \sqrt{\langle u,u \rangle_{h}}$ with
$\langle u,v \rangle_{h} = h(u,v) + \langle u,v \rangle$ for  $u,v \in D(h)$ which turns $D(h)$ into a Hilbert space.
A \textit{Schrödinger operator} $H$ is defined as the associated operator to $h$ with a domain $D(H)$ given by
\begin{equation}
D(H) = 
\left\{  u \in  D(h) \ | \  
\exists v \in \mathrm{L}^{2} \left( \R^{n} \right) \ : \  h(u,w) = \langle v, w \rangle \ \text{for every } 
w \in D(h) \right\}.
\end{equation}
Hence $H$ satisfies the equation 
\begin{equation}
\langle Hu, w \rangle = h(u,w)
\end{equation}
for $u \in D(H)$ and every $w \in D(h)$. Furthermore, $H$ is self-adjoint in $\mathrm{L}^{2} \left( \R^{n} \right)$
such that $(0,\infty)$ is contained in the resolvent set $\rho (H)$. The latter is shown by the use of Riesz's representation 
theorem using the completeness of $D(h)$ with respect to $\| \cdot \|_{h}$. 
The Lumer-Phillips theorem infers that $-H$ is the generator of a $C_{0}$-semigroup 
$\left\{ \ \mathrm{e}^{-tH} \ | \ t \geq 0 \ \right\}$
of contractions in $\mathrm{L}^{2} \left( \R^{n} \right)$ which we call a \textit{Schrödinger Semigroup} in 
$\mathrm{L}^{2} \left( \R^{n} \right)$. Furthermore the spectrum 
$\sigma(H)$ consists only of eigenvalues with a \textit{ground state energy} $E_{0} = \min \sigma(H) \in [0,\infty)$ being simple
and a corresponding eigenfunction $\varphi$ being strictly positive almost everywhere in $\R^{n}$. We call $\varphi$ the 
\textit{ground state} of $H$. Now, let us show that $\varphi$ is a continuous function in $\R^{n}$.\\

\begin{lem}\label{ground_state_continuous}
The ground state $\varphi$ of $H$ has a continuous representative on $\R^{n}$.\\
\end{lem}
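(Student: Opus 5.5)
Since $q$ is continuous and non-negative, it is locally bounded, and the claim is purely local: it suffices to show that $\varphi$ has a continuous representative on every ball $B_{R}=B(0,R)$. On each ball these representatives then agree with one another, so they glue together to a continuous function on $\R^{n}$. The approach is elliptic regularity for the weak eigenvalue equation. From $\varphi\in D(H)$ and $H\varphi=E_{0}\varphi$ we get
$$
\sum_{j=1}^{n}\langle \partial_{j}\varphi,\partial_{j}w\rangle = \langle (E_{0}-q)\varphi, w\rangle \qquad \text{for all } w\in C_{c}^{\infty}(\R^{n}) \subseteq D(h).
$$
Hence $-\Delta\varphi = f := (E_{0}-q)\varphi$ holds in the distributional sense. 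Because $q$ is bounded on bounded sets, $f\in \mathrm{L}^{2}_{loc}(\R^{n})$.

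First I will bootstrap the local integrability of $\varphi$ until $\varphi\in \mathrm{L}^{p}_{loc}$ for some $p>n/2$. Fix a cutoff $\chi\in C_{c}^{\infty}(\R^{n})$ with $\chi=1$ on $B_{R}$. Then $\chi\varphi$ satisfies
$$
-\Delta(\chi\varphi)=\chi f-2\nabla\chi\cdot\nabla\varphi-(\Delta\chi)\varphi .
$$
The right-hand side lies in $\mathrm{L}^{2}$, since $\varphi\in H^{1}$. By Calderón--Zygmund estimates, or directly via the Fourier transform, $\chi\varphi\in H^{2}$. I will iterate this on nested balls: if $\varphi\in W^{k,p}_{loc}$, then $f=(E_{0}-q)\varphi\in \mathrm{L}^{p^{*}}_{loc}$, where $p^{*}$ is the integrability exponent given by Sobolev embedding. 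Interior $\mathrm{L}^{p}$ regularity then gives $\varphi\in W^{2,p^{*}}_{loc}$. After finitely many steps we reach $\varphi\in W^{2,p}_{loc}$ with $p>n/2$. Morrey/Sobolev embedding $W^{2,p}\hookrightarrow C^{0,\alpha}$ on balls then yields a continuous representative on $B_{R}$.

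As an alternative that avoids higher regularity theory, I could use the Newtonian potential. Write
$$
\chi\varphi = \Gamma * g, \qquad g := -\Delta(\chi\varphi), \qquad \Gamma(x)=c_{n}|x|^{2-n},
$$
which is valid since $n\ge 3$ and $\chi\varphi$ has compact support. The term $\Gamma*(\chi f)$ is continuous as soon as $\chi f\in \mathrm{L}^{p}$ with $p>n/2$, because $\Gamma\in \mathrm{L}^{p'}_{loc}$ in that case. The gradient term is handled by integrating by parts to $\nabla\Gamma*(\varphi\nabla\chi)$, which is again a potential of the same type.

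The main obstacle is the bootstrap step that raises the integrability of $q\varphi$ above $n/2$ when the dimension $n$ is large. This step needs the local boundedness of $q$, which is where continuity of $q$ is used. I would track the exponents explicitly, using $1/p_{k+1}=1/p_{k}-2/n$, and stop once $p_{k}>n/2$. Alternatively, I could invoke the standard result (e.g.\ Gilbarg--Trudinger, Thm.~8.22, De Giorgi--Nash--Moser) that weak $H^{1}_{loc}$ solutions of $-\Delta u+Vu=0$ with $V\in \mathrm{L}^{\infty}_{loc}$ are locally Hölder continuous, applied with $V=q-E_{0}$.
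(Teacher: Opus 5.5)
Your proposal is correct, but your main route is not the paper's. The paper's proof is exactly the fallback in your last sentence. It applies Theorem 8.22 of Gilbarg--Trudinger (De Giorgi--Nash--Moser) to $L=\Delta-q+E_{0}$ on each ball $B_{R}(0)$. This uses only that $\varphi\in H^{1}(B_{R}(0))$ is a weak solution of $L\varphi=0$ and that the zeroth-order coefficient is bounded there, by continuity of $q$. It then lets $R$ be arbitrary.

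Your primary argument instead exploits that the principal part is the constant-coefficient Laplacian. You get $H^{2}$ regularity from the Fourier transform, then apply interior Calder\'on--Zygmund estimates plus Sobolev embedding finitely many times (roughly $n/4$) until $p>n/2$, and conclude with $W^{2,p}\hookrightarrow C^{0,\alpha}$. This avoids the De Giorgi--Nash--Moser machinery and yields more. Once $\varphi$ is continuous, $q\varphi\in\mathrm{L}^{\infty}_{loc}$, so $\varphi\in W^{2,p}_{loc}$ for all $p<\infty$, hence $\varphi\in C^{1,\alpha}_{loc}$. The price is exponent bookkeeping. The paper's citation is a one-liner and would survive even bounded measurable coefficients in the principal part.

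Two bookkeeping remarks:
\begin{itemize}
\item With the cutoff identity alone, the term $\nabla\chi\cdot\nabla\varphi$ only lies in $\mathrm{L}^{r}$ with $1/r=1/p_{k}-1/n$. So the step $1/p_{k+1}=1/p_{k}-2/n$ must come from a genuine interior $W^{2,p}$ estimate (Chapter 9 of Gilbarg--Trudinger). Otherwise you gain only $1/n$ per step, which still terminates.
\item In the Newtonian-potential variant, $\nabla\Gamma*(\varphi\nabla\chi)$ is not a potential of the same type: $|\nabla\Gamma|\sim|x|^{1-n}$ would require $p>n$. However, $\nabla\chi$ vanishes on $B_{R}$, so this term is harmonic, hence smooth, inside $B_{R}$, which is all you need.
\end{itemize}
The borderline case $p_{k}=n/2$ is harmless, since Sobolev embedding then gives every finite exponent.
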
 

\begin{proof}
We use Theorem 8.22 on page 200 in \cite{GilbargTrudinger01} and define $L= \Delta - q(x) + E_{0}$ 
in $\mathrm{L}^{2}\left( B_{R}(0) \right)$ for any given radius $R>0$
in the same manner we defined $H$ in $\mathrm{L}^{2}\left( \R^{n} \right)$.
Then $\varphi \in H^{1}\left( B_{R}(0) \right)$ is a weak
solution to $Lu = 0$ in $B_{R}(0)$. By Theorem 8.22 we treat $\varphi$ as a continuous function in $B_{R/2}(0)$ and since
$R > 0$ was chosen arbitrarily the claim is implied.\\
\end{proof}

Furthermore we show that the ground state $\varphi$ is 
strictly positive everywhere in $\R^{n}$. Due to the continuity of $\varphi$ this implies 
$$
\min_{x \in \overline{B_{R}(x_{0})}} \varphi(x) = \delta_{x_{0},R} > 0
$$ 
for every $x_{0} \in \R^{n}$ and $R > 0$ where $B_{R}(x_{0})$ denotes a ball of radius $R$ in $\R^{n}$ centered at $x_{0}$.\\

\begin{lem}\label{strict_positivity_ground_state}
The ground state $\varphi$ is strictly positive everywhere in $\R^{n}$.\\
\end{lem}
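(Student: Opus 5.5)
We will obtain strict positivity from the weak Harnack inequality (or, equivalently, the strong maximum principle) for nonnegative weak supersolutions, in the same spirit as the proof of Lemma \ref{ground_state_continuous}. By that lemma we may take $\varphi$ continuous on $\R^{n}$. We already know $\varphi \geq 0$ everywhere, and $\varphi > 0$ almost everywhere. Suppose, for contradiction, that $\varphi(x_{0}) = 0$ for some $x_{0} \in \R^{n}$.

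Fix a radius $R > 0$ and work on the ball $B_{4R}(x_{0})$. There $\varphi \in H^{1}(B_{4R}(x_{0}))$ is a weak solution of
$$
\Delta \varphi - (q(x) - E_{0})\varphi = 0 .
$$
The coefficient $c(x) = E_{0} - q(x)$ is continuous, hence bounded on $\overline{B_{4R}(x_{0})}$. Thus we are in the setting of the De Giorgi--Nash--Moser theory in \cite{GilbargTrudinger01}. We invoke the weak Harnack inequality, Theorem 8.18 on page 194 of \cite{GilbargTrudinger01}. It applies to nonnegative weak supersolutions $Lu \leq 0$ of $Lu = \Delta u + cu$ with bounded $c$, and $\varphi$ is in fact a solution. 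With exponent $p = 1$ it gives
$$
R^{-n} \int_{B_{2R}(x_{0})} \varphi \, \mathrm{d}x \ \leq \ C \inf_{B_{R}(x_{0})} \varphi ,
$$
where $C$ depends only on $n$, $R$ and $\sup_{B_{4R}(x_{0})} |c|$.

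Since $\varphi$ is continuous and $\varphi(x_{0}) = 0$, the right-hand side vanishes. Hence $\varphi = 0$ almost everywhere on $B_{2R}(x_{0})$. This contradicts $\varphi > 0$ almost everywhere in $\R^{n}$. As an alternative ending, one can avoid the a.e.\ positivity: take $R$ arbitrary, conclude $\varphi \equiv 0$ on all of $\R^{n}$, and contradict $\|\varphi\|_{2} \neq 0$ for an eigenfunction.

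The main obstacle is checking the hypotheses of Theorem 8.18. The sign convention must be handled correctly: Gilbarg--Trudinger write supersolutions as $Lu \leq 0$ for an operator $L$ in divergence form. The zeroth-order coefficient must be bounded on the ball, which holds because $q$ is continuous; no sign condition on $c$ is needed for the weak Harnack inequality. Finally, $\varphi$ must lie in $H^{1}$ of the ball, which follows from $\varphi \in D(h)$. Once these are verified, the contradiction above completes the proof.
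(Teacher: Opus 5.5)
Your proposal is correct and follows essentially the paper's argument: a Harnack-type inequality from Gilbarg--Trudinger, applied on a ball around a hypothetical zero $x_0$ of the continuous ground state, forces $\varphi$ to vanish on a set of positive measure, which contradicts $\varphi>0$ almost everywhere. The only difference is that you use the weak Harnack inequality (Theorem 8.18 with $p=1$, which needs only the supersolution property), whereas the paper uses the full Harnack inequality for solutions (Theorem 8.20, $\sup_{B_{R/4}(x_0)}\varphi \le C\inf_{B_{R/4}(x_0)}\varphi$); this changes nothing of substance.
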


\begin{proof}
The ground state $\varphi$ is strictly positive almost everywhere in $\R^{n}$. We use the Harnack inequality
in form of Theorem 8.20 on page 199 in \cite{GilbargTrudinger01} to argue that $\varphi(x) > 0$ is true for every $x \in \R^{n}$.\\

We prove by contradiction. Suppose there is a $x_{0} \in \R^{n}$ with $\varphi(x_{0})=0$.
We define the elliptic operator $L$ as mentioned in Lemma \ref{ground_state_continuous} 
in $\mathrm{L}^{2}\left( B_{R}(x_{0}) \right)$ for any fixed radius $R>0$. Then
$L$ meets all requirements of Theorem 8.20
in \cite{GilbargTrudinger01} with $L\varphi \equiv 0$ in $B_{R}(x_{0})$. Hence
\begin{equation} 
\exists C > 0 \ : \ \sup_{x \in B_{R/4}(x_{0})} \varphi(x) \ \leq \ C \inf_{x \in B_{R/4}(x_{0})} \varphi(x)
\end{equation}
is implied. Please mind that Theorem 8.20 originally refers to the essential infimum and essential supremum which is equal to
the regular infimum and supremum respectively due to the continuity of $\varphi$. Then 
$ \inf_{x \in B_{R/4}(x_{0})} \varphi(x) = \varphi(x_{0}) = 0$ implies $\sup_{x \in B_{R/4}(x_{0})} \varphi(x) \leq 0$. But that 
contradicts $\varphi > 0$ almost everywhere in $\R^{n}$. Therefore, such an $x_{0}$ does not exist and  $\varphi(x) > 0$ 
is true for every $x \in \R^{n}$.\\
\end{proof}

\begin{rem} \
\begin{enumerate}[i.)]
\item Notice that the continuity of $\varphi$ is not necessarily needed in Section \ref{Proofs_for_Rosen_Inequalities}. In particular,
we only need a lower boundary $C_{R} > 0$  of $\varphi$ on a ball $B_{R}(0)$. However, continuity of $\varphi$ is much more 
convenient and is also implied as we have seen.
\item For much more information on Schrödinger forms and operators please consider \cite{Leinfelder81} by  
H. Leinfelder and C. G. Simader.
\item The statement on the ground state energy $E_{0}$ of $H$ is taken from  Theorem 10.11 on page $236$ in \cite{Teschl09} and
holds due to an expansion of the Perron-Frobenius theory to bounded and positivity improving operators in infinite dimensional 
vector spaces.
\end{enumerate}
\end{rem}

\section{A class of $q$ implying Rosen Inequalities} \label{Main_theorems_Rosen}

We present a growth condition of $q \colon \R^{n} \to [0,\infty)$ in 
$H = -\Delta + q(x)$ that implies Rosen inequalities for the ground state $\varphi \in D(H)$, i.e.
\begin{equation}\label{Rosen}
\forall \varepsilon > 0 \ \exists \gamma(\varepsilon) > 0 \ \forall x \in \R^{n} \ : \ 
-\ln \left( \varphi(x) \right) \ \leq \ \varepsilon q(x) + \gamma(\varepsilon).\\
\end{equation}

For $k>1$ and $m \in \N$ we define  
\begin{equation}
f_{k, m} (t) = \left( \ln^{(m)}(t) \right)^{k} \ \prod_{p=0}^{m-1} \ln^{(p)}(t)
\end{equation} 
for $t > R_{m}$ with $R_{m} > 0$ chosen large that $\ln^{(m)}(t) > 0$ is true.\footnote{Note that $\ln^{(0)}(t) = t$ and
$\ln^{(p)}(t) =\underbrace{\ln(\ln(\dots(\ln(t)) \dots))}_{p-\text{times}}$}\\

\begin{thm} \label{Main_Thm_2}
Let $Q \colon [0,\infty) \to (0,\infty)$ be monotone increasing with $r^{2} < Q(r)$. Furthermore let 
$m \in \N$, $k>1$ and $R_{m} > 0$ such that
\begin{enumerate}[i.)]
\item $Q$ is differentiable in $(R_{m}, \infty)$,
\item $\forall  r \geq R_{m} \ : \ \ln^{(m)}\left(  \int_{0}^{r} Q(t)^{\frac{1}{2}} \ \mathrm{d}t  \right) \ > \ 0$ and
\item $\forall  r \geq R_{m} \: \ 0 \ < \ f_{k, m-1}\left( \ln \left( Q(r) \right) \right)rQ(r)^{-\frac{1}{2}} \ < \ 1$.\\
\end{enumerate} 
Also let $Q^{\prime}(r)Q(r)^{-\frac{3}{2}} \to 0$ for $r \to \infty$ and $d \in (0,1]$. Then for every continuous 
potential $q \colon \R^{n} \to [0,\infty)$ satisfying
\begin{equation}
d \left( \int_{0}^{ |x|} Q(t)^{\frac{1}{2}} \ \mathrm{d}t \right) \
f_{k, m-1} \left( \ln \left( \int_{0}^{ |x|} Q(t)^{\frac{1}{2}} \ \mathrm{d}t \right) \right) \ \leq q(x) \ \leq \ Q \left( |x| \right)
\end{equation}
for $|x| \geq R_{m}$, the ground state $\varphi$ of $H=-\Delta+q(x)$ satisfies Rosen inequalities.\\
\end{thm}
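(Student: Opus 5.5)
The plan is to compare $\varphi$ from below with an explicit radial subsolution and then turn the resulting logarithmic bound into a Rosen inequality. Set $S(r) = \int_{0}^{r} Q(t)^{\frac{1}{2}} \d t$ and take the candidate
$$
\psi(r) = \exp\left( -(1+\eta) S(r) \right)
$$
for some fixed $\eta > 0$. A direct computation gives
$$
-\psi'' - \tfrac{n-1}{r}\psi' + Q\psi = \psi \left( -(1+\eta)^{2} Q + \tfrac{1+\eta}{2} Q' Q^{-\frac{1}{2}} + (1+\eta)\tfrac{n-1}{r} Q^{\frac{1}{2}} + Q \right).
$$
Since $Q'Q^{-\frac{3}{2}} \to 0$ and $r^{2} < Q$ gives $r^{-1}Q^{\frac{1}{2}} \le Q \cdot Q^{-\frac{1}{2}}r^{-1} = o(Q)$ (as $rQ^{1/2}\to\infty$), the bracket is $\le Q\,(1-(1+\eta)^{2} + o(1)) < 0$ for $r \ge R$ with $R$ large. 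Hence $\psi$ solves the radial Schrödinger inequality (\ref{radial_Schroedinger_inequality}). Then $u(x) = \psi(|x|)$ satisfies $(-\Delta + q - E_{0})u \le (-\Delta + Q(|x|))u \le 0$ on $|x| > R$, because $q \le Q(|x|)$ there and $E_{0}\ge 0$.

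The first step is the comparison. By Lemma \ref{strict_positivity_ground_state}, $\varphi \ge \delta_{0,R} > 0$ on $\overline{B_{R}(0)}$, so after multiplying $\psi$ by a constant $c > 0$ we get $c\psi \le \varphi$ on $|x| = R$. Both $c\psi$ and $\varphi$ tend to $0$ at infinity, in the $\mathrm{L}^{2}$/weak sense. Agmon's comparison principle on the exterior domain $\{|x|>R\}$, applied to the supersolution $\varphi$ ($(H-E_{0})\varphi = 0$) and the subsolution $c\psi$, yields $\varphi(x) \ge c\,\mathrm{e}^{-(1+\eta)S(|x|)}$ for $|x| \ge R$. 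This gives
$$
-\ln \varphi(x) \le (1+\eta) S(|x|) + C
$$
for all $x$, with the bounded region absorbed into $C$ again by Lemma \ref{strict_positivity_ground_state}. The technical point is justifying the comparison principle for a weak solution on an unbounded domain. I would handle it by the usual truncation argument: test with $(c\psi-\varphi)_{+}\chi_{\rho}$, using $q \ge 0$ and the decay of $\psi$.

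The second step, which I expect to be the main obstacle, is to show $S(|x|) \le \varepsilon q(x) + \gamma(\varepsilon)$. By the lower bound on $q$ it suffices to show
$$
s \le \varepsilon d\, s\, f_{k,m-1}(\ln s) + \gamma \quad \text{for } s = S(|x|) \text{ large}.
$$
This holds as soon as $f_{k,m-1}(\ln s) \ge (\varepsilon d)^{-1}$. Condition ii.) ensures $\ln s$ lies in the domain where $f_{k,m-1}$ is defined and increasing to $\infty$. So for $s \ge s_{\varepsilon}$ the inequality is trivial, and for $s < s_{\varepsilon}$ we take $\gamma \ge s_{\varepsilon}$. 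Here I would invoke Young's inequality for increasing functions, with the function $\ln$ and its inverse $\exp$. Writing $s = \mathrm{e}^{\ln s}$ and splitting $s \cdot 1$ as a product bounded by $\int \mathrm{e}$ plus $\int \ln$ gives an explicit $\gamma(\varepsilon)$, matching the paper's stated tools. The monotonicity reduction alone already suffices, though.

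Condition iii.) relates $\ln Q(r)$ with $\ln S(r)$ and is presumably needed to ensure the range of $q$ is consistent, i.e. that the lower bound stays below $Q$; I would only use it to check non-vacuity. Combining the two steps gives
$$
-\ln\varphi(x) \le (1+\eta)\left(\varepsilon' q(x) + \gamma(\varepsilon')\right) + C.
$$
Choosing $\varepsilon' = \varepsilon/(1+\eta)$ yields (\ref{Rosen}).
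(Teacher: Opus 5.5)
Your proposal is correct and follows essentially the paper's route. The paper uses the same radial subsolution $\exp\bigl(-\sqrt{2}\int_0^r Q(t)^{1/2}\,\mathrm{d}t\bigr)$ (your choice $\eta=\sqrt{2}-1$) and Agmon's comparison principle on $\Omega_R$, and it invokes condition iii.) only to show that the admissible range for $q$ is non-empty. Your case split between $f_{k,m-1}(\ln s)\ge (\varepsilon d)^{-1}$ and $s<s_\varepsilon$ is exactly the paper's bound $ab\le a f(\ln a)+b g(b)$ with $g=(f\circ\ln)^{-1}$, which yields $\gamma(\varepsilon)=\sqrt{2}\,g(\sqrt{2}/(d\varepsilon))+C$; your aside about applying Young's inequality to $\ln$ and $\exp$ is therefore unnecessary and is not the right pairing.
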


\begin{rem} \label{Remark 1} \
\begin{enumerate}[a)]

\item Notice that $\int_{0}^{r} Q(t)^{\frac{1}{2}} \ \mathrm{d}t > \int_{0}^{r} t \d t = \frac{1}{2}r^{2}$ and 
$$
f_{k, m-1} \left( \ln \left( \int_{0}^{r} Q(t)^{\frac{1}{2}} \ \mathrm{d}t \right) \right) \geq 
\ln \left( \int_{0}^{r} Q(t)^{\frac{1}{2}} \ \mathrm{d}t \right) > \frac{2}{d} 
$$
are satisfied for large $r$. Therefore $q(x) > |x|^{2}$ is true for $|x|$ large.

\item We use the auxiliary function $f_{k, m-1}$ in the lower bound of $q$ to include $\varepsilon$ in the Rosen inequalities
of $\varphi$ without adding much growth. However, $k > 1$ and $\prod_{p=0}^{m-1} \ln^{(p)}(t)$ in $f_{k,m-1}$ 
guarantee 
$$
\int_{R_{m}}^{\infty} \frac{1}{f_{k, m} (t)} \d t < \infty
$$
which is needed to prove Theorem \ref{IU_of_e^{-tH}}.\\

\end{enumerate}
\end{rem}

We give examples of possible bounding potentials $Q$ which are close to $r^{2}$. Please mind that these are taken 
from Example 2.5 on page 4105 in \cite{AlziaryTakac09} and all meet the requirements of Theorem \ref{Main_Thm_2}

\begin{lem} \label{Lemma_1} \ \\
We present auxiliary functions satisfying the requirements of Theorem \ref{Main_Thm_2} for large $r$:
\begin{enumerate}[a)]
\item $Q_{1}(r) = r^{\alpha}$ for $\alpha > 2$,
\item $Q_{2}(r) = \left(\ln(r) \right)^{\alpha} r^{2} $ for $\alpha > 2$,
\item $Q_{3}(r) = \left( \ln\left( \ln(r)\right) \right)^{\alpha} \left(\ln(r) \right)^{2} r^{2}$ for $\alpha > 2$ and
\item $Q_{4}(r) = \left( \ln^{(l)}(r) \right)^{\alpha} \prod_{p=0}^{l-1} \left( \ln^{(p)}(r) \right)^{2}$ for $\alpha > 2$ and any 
chosen $l \in \N$.\\
\end{enumerate}
\end{lem}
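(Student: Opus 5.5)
We verify the hypotheses of Theorem \ref{Main_Thm_2} for each $Q_{j}$ in turn. In each case the conditions are: (0) $Q$ is monotone increasing with $r^{2} < Q(r)$, (i) $Q$ is differentiable for large $r$, (ii) $\ln^{(m)}\bigl(\int_{0}^{r} Q^{1/2}\bigr) > 0$ for large $r$, (iii) $0 < f_{k,m-1}(\ln Q(r))\, r\, Q(r)^{-1/2} < 1$ for large $r$, and (iv) $Q'(r)Q(r)^{-3/2} \to 0$.

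Conditions (0), (i) and (ii) are routine. On $[0,\infty)$ we may modify $Q$ on a compact interval so that it is positive, increasing and above $r^{2}$, since the theorem only uses behaviour for $r \geq R_{m}$ together with global monotonicity. Each $Q_{j}$ is smooth and increasing for large $r$. The function $\int_{0}^{r} Q^{1/2} \geq r^{2}/2$ tends to infinity, so (ii) holds for any fixed $m$ once $r$ is large. The genuine freedom is the choice of $m$ and $k$, which we fix case by case.

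\textbf{Condition (iv).} Write $Q = r^{2} g(r)$, where $g$ is either a power of $r$ or a product of powers of iterated logarithms. Then
$$
\frac{Q'}{Q^{3/2}} = \frac{Q'/Q}{Q^{1/2}}, \qquad \frac{Q'}{Q} = \frac{2}{r} + \frac{g'}{g} = O\Bigl(\frac{1}{r}\Bigr).
$$
Since $Q^{1/2} \geq r$, the quotient is $O(r^{-2}) \to 0$. The same computation covers all four cases.

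\textbf{Condition (iii).} This is the main point. We must choose $m$ and $k$ so that $f_{k,m-1}(\ln Q(r))$ grows strictly slower than $Q^{1/2}/r = g^{1/2}$.
\begin{enumerate}[a)]
\item For $Q_{1}$ we have $g^{1/2} = r^{(\alpha-2)/2}$, a positive power of $r$. With $m=1$, $f_{k,0}(t) = t^{k}$, so $f_{k,0}(\ln Q) = (\alpha \ln r)^{k}$, which is eventually smaller than any positive power of $r$.
\item For $Q_{2}$ we have $g^{1/2} = (\ln r)^{\alpha/2}$ and $\ln Q_{2} = 2\ln r + \alpha \ln\ln r \sim 2\ln r$. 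Take $m=2$, so $f_{k,1}(t) = (\ln t)^{k}\, t$. Then $f_{k,1}(\ln Q_{2}) \sim 2 (\ln\ln r)^{k} \ln r$. Since $\alpha/2 > 1$, the ratio is $(\ln\ln r)^{k} (\ln r)^{1-\alpha/2} \to 0$ for every $k > 1$.
\item For $Q_{4}$ (with $Q_{3}$ the case $l=2$) we have
$$
g^{1/2} = \bigl(\ln^{(l)} r\bigr)^{\alpha/2} \prod_{p=1}^{l-1} \ln^{(p)} r .
$$
Take $m = l+1$, so $f_{k,l}(t) = (\ln^{(l)} t)^{k} \prod_{p=0}^{l-1} \ln^{(p)} t$. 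Use $\ln Q_{4} = 2\ln r\,(1+o(1))$ and the fact that $\ln^{(p)}(c\ln r\,(1+o(1))) = \ln^{(p+1)} r\,(1+o(1))$ for $p \geq 1$. This gives
$$
f_{k,l}(\ln Q_{4}) \sim 2\,\bigl(\ln^{(l+1)} r\bigr)^{k} \prod_{p=1}^{l} \ln^{(p)} r .
$$
The quotient with $g^{1/2}$ is then $2 (\ln^{(l+1)} r)^{k} (\ln^{(l)} r)^{1-\alpha/2} \to 0$, because $\alpha > 2$.
\end{enumerate}
Positivity in (iii) holds because all the iterated logarithms involved are positive for large $r$. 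Hence each product in (iii) is eventually in $(0,1)$, and we take $R_{m}$ large accordingly.

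The expected obstacle is the asymptotic bookkeeping of iterated logarithms of $\ln Q_{4}$, in particular checking that the factor $2$ and the $\alpha \ln^{(l+1)}$-type corrections only contribute $1+o(1)$. We handle this by induction on $p$, using $\ln(ab) = \ln a + \ln b$ and $\ln(x(1+o(1))) = \ln x + o(1)$.
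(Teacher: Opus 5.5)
Your verification is correct, but it chooses the parameter $m$ differently from the paper, and that changes where $\alpha>2$ enters. For $Q_{1}$ you agree with the paper ($m=1$).

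The paper takes $m=1$ for $Q_{2}$ and $m=2$ for $Q_{3}$, and presumably $m=l$ for $Q_{4}$ (it only says ``the same arguments''). With that choice the $k$-th power in $f_{k,m-1}(\ln Q)$ falls on the same logarithm that carries the exponent $\frac{\alpha}{2}$ in $Q^{1/2}/r$. So the paper must restrict to $k\in(1,\frac{\alpha}{2})$, and it gets a quotient of order $(\ln^{(l)} r)^{k-\frac{\alpha}{2}}$. It controls $f_{k,m-1}(\ln Q)$ only through crude one-sided bounds such as $\ln Q_{2}\le(2+\alpha)\ln r$ and $\ln^{(2)} Q_{3}\le 2\ln^{(2)} r$, and it checks $Q^{\prime}Q^{-3/2}\to 0$ by explicit differentiation, case by case.

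For $Q_{2}$, $Q_{3}$, $Q_{4}$ you take one more iterated logarithm, $m=l+1$. The extra factor $\ln^{(l)}$ in $f_{k,l}$ is then absorbed by $(\ln^{(l)} r)^{\alpha/2}$ using only $\alpha>2$, and the $k$-th power lands on the slower $\ln^{(l+1)} r$, so every $k>1$ is allowed. This buys something concrete. Since $f_{k,l}(t)=o\bigl(f_{k',l-1}(t)\bigr)$ for every $k'>1$, your lower barrier $d\,X f_{k,m-1}(\ln X)$ in Theorem~\ref{Main_Thm_2}, with $X=\int_{0}^{|x|}Q(t)^{1/2}\,\mathrm{d}t$, is eventually smaller than the paper's. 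Hence any $q$ admissible with the paper's parameters is also admissible with yours, after enlarging $R_{m}$. Your uniform argument for $Q^{\prime}Q^{-3/2}\to 0$, via $Q^{\prime}/Q=O(1/r)$ and $Q^{1/2}\ge r$, is also tidier than the paper's computation.

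The asymptotic bookkeeping you flag as the main obstacle is more than you need. Condition iii.) only requires an upper bound on $f_{k,l}(\ln Q_{4})$, so one-sided estimates in the paper's style suffice. Namely, $\ln Q_{4}\le c\ln r$ and $\ln^{(p)}Q_{4}\le 2\ln^{(p)} r$ for $2\le p\le l+1$ and large $r$. These give the quotient bound $C(\ln^{(l+1)} r)^{k}(\ln^{(l)} r)^{1-\frac{\alpha}{2}}$ directly, without an induction on $1+o(1)$ factors.
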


\section{Proofs for Rosen Inequalities} \label{Proofs_for_Rosen_Inequalities}

\subsection{Preliminary} \label{Preliminary}

First, we determine conditions on the upper bounding potential $Q$ such that the 
strictly positive ground state $\varphi$ of $H = -\Delta + q(x)$ could possibly satisfy Rosen inequalities, i.e.
\begin{equation*}
\forall \varepsilon > 0 \ \exists \gamma(\varepsilon) > 0 \ \forall x \in \R^{n} \ : \ 
-\ln \left( \varphi(x) \right) \ \leq \ \varepsilon q(x) + \gamma(\varepsilon).\\
\end{equation*}
Let $Q$ be an upper bounding function such that $q(x) \ \leq \ Q \left( |x| \right)$
holds for $ |x| \geq R$ and let $\psi \in \mathrm{L}^{2} \left( \R^{n} \right)$ be a strictly positive solution of
\begin{equation}\label{psi_subsolution}
0 \ \geq \ \left( - \Delta \psi \right) (x) + Q\left(  |x| \right) \psi(x) \ \geq \ \left( - \Delta \psi \right) (x) + q\left( x \right) \psi(x)
\end{equation}
for every $|x| \geq R$. Then $\psi$ is a subsolution of $H$ in $\Omega_{R} = \{ x \in \R^{n} \ : \ |x| > R \}$ at the ground state 
energy level $E_{0} = \min \left( \sigma(H) \right) \geq 0$ by Definition \ref{Definition_sub_supersolution}.
The ground state $\varphi$, on the other hand, is a supersolution 
of $H$ in $\Omega_{R}$ at $E_{0}$. Using Theorem \ref{comparision_principle} and Remark \ref{Remark_comparision_principle} 
there exists a constant $c > 0$ such that 
$$
\psi(x)\leq c \varphi(x)
$$
holds for every  $x \in \Omega_{R}$. We use Lemma \ref{ground_state_continuous} and 
Lemma \ref{strict_positivity_ground_state} to conclude that
$$
\varphi(x) \geq \min_{y \in \overline{B_{R}(0)}} \varphi(y) := \delta_{0} > 0
$$
is satisfied for every $|x| \leq R$. Therefore,
\begin{equation}\label{Rosen_psi_phi}
-\ln \left( \varphi(x) \right) \ = \ \ln \left( \frac{1}{\varphi(x)} \right) \ \leq \ \ln \left( \frac{1}{\psi(x)} \right) + C
\end{equation}
is implied for every $x \in \R^{n}$ for an appropriate constant $C > 0$. If $\psi$ additionally satisfies 

\begin{equation}\label{psi_Rosen}
 \ln \left( \frac{1}{\psi(x)} \right) \leq \varepsilon q(x) + \gamma(\varepsilon)
\end{equation}
for every $x \in \R^{n}$, then Rosen inequalities of $\varphi$ are implied. 
Next, let us consider the radial Schrödinger inequality

\begin{equation} \label{Radial_Schrödinger_inequality}
\psi^{\prime \prime}(r) + \frac{n-1}{r} \psi^{\prime}(r) \ \geq \ Q(r)\psi(r)
\end{equation}
for strict positive and twice differentiable functions $\psi \in \mathrm{L}^{2}\left( (0,\infty), r^{n-1} \mathrm{d}r \right)$.
We define
$$
\psi(r) = \exp \left( - \sqrt{2}\int_{0}^{r} Q(t)^{\frac{1}{2}} \ \mathrm{d}t   \right)
$$
for $r \geq 0$. Furthermore $Q(r) \to \infty$ and $Q^{\prime}(r)Q(r)^{-\frac{3}{2}} \to 0$ for $r \to \infty$ imply
$$
-\frac{1}{2} < \frac{Q^{\prime}(r)}{Q(r)^{\frac{3}{2}}} + \frac{n-1}{rQ(r)^{\frac{1}{2}}} < \frac{1}{2}  
$$
for every $r \geq R$ and $R$ sufficiently big. Then
\begin{align*}
& \psi^{\prime \prime}(r) + \frac{n-1}{r} \psi^{\prime}(r) = 
Q(r) \psi(r) \left(  2 -  \frac{Q^{\prime}(r)}{\sqrt{2}Q(r)^{\frac{3}{2}}}  - \frac{\sqrt{2}(n-1)}{rQ(r)^{\frac{1}{2}}}  \right) \\
& \geq Q(r) \psi(r) \left(  2 - \sqrt{2} \left( \frac{Q^{\prime}(r)}{Q(r)^{\frac{3}{2}}} + \frac{n-1}{rQ(r)^{\frac{1}{2}}} \right) \right) \\
& \geq Q(r) \psi(r) \left(  2 - \frac{1}{\sqrt{2}} \right) \geq Q(r) \psi(r) 
\end{align*}
follows for every $r \geq R$. We conclude to
\begin{equation}\label{Rosen_phi}
-\ln \left( \varphi(x) \right) \ = \ \ln \left( \frac{1}{\varphi(x)} \right) \ \leq \ \ln \left( \frac{1}{\psi(|x|)} \right) + C \ = \
\sqrt{2} \int_{0}^{|x|} Q(t)^{\frac{1}{2}} \ \mathrm{d}t + C
\end{equation}
for every $x \in \R^{n}$ with $|x| \geq R$. Using Lemma \ref{ground_state_continuous} and 
Lemma \ref{strict_positivity_ground_state} we can choose $C$ sufficiently large such that (\ref{Rosen_phi}) is true for every 
$x \in \R^{n}$. This solution $\psi$ of the radial Schrödinger inequality 
(\ref{Radial_Schrödinger_inequality}) is the
foundation of our approach for Rosen inequalities of $\varphi$. \\

\subsection{Proof of Theorem \ref{Main_Thm_2}} \label{Proof_Main_Thm_2}

\begin{enumerate}[a)]

\item Notice that
$$
f_{k,m-1} \left( \ln\left( \int_{0}^{r} Q(t)^{\frac{1}{2}} \ \mathrm{d}t \right) \right) \ \leq \ 
f_{k,m-1} \left( \ln\left( rQ(r)^{\frac{1}{2}} \right) \right) \ \leq \ f_{k,m-1} \left( \ln\left( Q(r) \right) \right)
$$
holds for every $r > R_{m}$ due to the monotonicity of $Q$ and $Q(r) > r^{2}$. Hence, we conclude to
$$
0 < \left( \int_{0}^{r} Q(t)^{\frac{1}{2}} \ \mathrm{d}t \right) 
f_{k,m-1} \left( \ln\left( \int_{0}^{r} Q(t)^{\frac{1}{2}} \ \mathrm{d}t \right) \right)
 \leq 
 r Q(r)^{\frac{1}{2}} f_{k,m-1} \left( \ln\left( Q(r) \right) \right)  <  Q(r)
$$
for $r \geq R_{m}$ by iii.) in Theorem \ref{Main_Thm_2}. Therefore there actually exists a gap between the radial
lower boundary function and the radial upper boundary function for $q$ to exist in for $|x| \geq R_{m}$.

\item By (\ref{Rosen_phi}) we have 
$$
-\ln \left( \varphi(x) \right) \ \leq \ \sqrt{2} \int_{0}^{|x|} Q(t)^{\frac{1}{2}} \ \mathrm{d}t + C
$$
for every $x \in \R^{n}$. We use the standard version of Young's inequality for increasing functions to include 
$\varepsilon > 0$. Define a function $f \colon \R \to [0,\infty)$ by
$$
f(q)=\left\{\begin{array}{ll} 
	f_{k,m-1}(q), & q \geq r_{0} \\
         f_{k,m-1}\left( r_{0} \right)\mathrm{e}^{\frac{q}{r_{0}}-1}, & q < r_{0}
\end{array}\right. 
$$
where $r_{0} = \ln \left( \int_{0}^{R_{m}} Q(t)^{\frac{1}{2}} \ \mathrm{d}t \right) > 0$. 
The composition of functions $f \circ \ln$ is continuous and strictly increasing on $(0,\infty)$ such that 
$f \left( \ln(t) \right) \to 0$ for $t \to 0^{+}$. Let $g$ be the inverse function of $f \circ \ln$  on $(0,\infty)$. 
Then for every $a,b > 0$ we infer that
$$
ab \ \leq \ \int_{0}^{a} f \left( \ln(t) \right) \ \mathrm{d}t + \int_{0}^{b} g(t)  \ \mathrm{d}t \ \leq \ a f \left( \ln(a) \right) + b g(b)
$$
is true. Let $x \in \R^{n}$ with $|x| \geq R_{m}$, $d \in (0,1)$ and $\varepsilon > 0$ be arbitrary but fixed. Define 
$a =  \int_{0}^{|x|} Q(t)^{\frac{1}{2}} \ \mathrm{d}t$ and $b = \frac{\sqrt{2}}{d \varepsilon}$ and conclude to
$$
\int_{0}^{|x|} Q(t)^{\frac{1}{2}} \ \mathrm{d}t \ \leq \ \frac{d \varepsilon}{\sqrt{2}} 
\left( \int_{0}^{|x|} Q(t)^{\frac{1}{2}} \ \mathrm{d}t \right)
f_{k,m-1} \left( \ln\left( \int_{0}^{|x|} Q(t)^{\frac{1}{2}} \ \mathrm{d}t \right) \right) + g\left( \frac{\sqrt{2}}{d \varepsilon} \right).
$$

\item Hence we conclude to
\begin{align*}
& -\ln \left( \varphi(x) \right) \ \leq \ \sqrt{2} \left( \int_{0}^{|x|} Q(t)^{\frac{1}{2}} \ \mathrm{d}t \right) + C \\
& \leq \ \varepsilon d \left( \int_{0}^{|x|} Q(t)^{\frac{1}{2}} \ \mathrm{d}t \right)
f_{k,m-1} \left( \ln\left( \int_{0}^{|x|} Q(t)^{\frac{1}{2}} \ \mathrm{d}t \right) \right)
 + \sqrt{2} \ g\left( \frac{\sqrt{2}}{d \varepsilon} \right) + C \\
& \leq \ \varepsilon q(x) + \underbrace{\sqrt{2} \ g\left( \frac{\sqrt{2}}{d \varepsilon} \right) + C}_{=\gamma(\varepsilon)} 
\end{align*}
for every $x \in \R^{n}$ with $|x| \geq R_{m}$. Again we argue as in (\ref{Rosen_phi}) that $C$ can be chosen sufficiently large such
that these Rosen inequalities hold for every $x \in \R^{n}$ which proves the claim.
\end{enumerate}

\subsection{Proof of Lemma \ref{Lemma_1}}

\begin{enumerate}[a)]

\item For $Q_{1}(r) = r^{\alpha}$ with $\alpha > 2$ the requirements of Theorem \ref{Main_Thm_2} are easy to verify since
$$
\left( \ln \left( Q(r) \right) \right)^{k} r Q(r)^{-\frac{1}{2}} \ = \ \frac{\alpha^{k} \left( \ln (r) \right)^{k}}{r^{\frac{\alpha}{2} - 1}} \to 0 
$$
and $Q^{\prime}(r)Q(r)^{-3/2} = \alpha r^{- ( \frac{\alpha}{2} + 1)} \to 0$ for $r \to \infty$.

\item Let $Q_{2}(r) = r^{2} \left( \ln(r) \right)^{\alpha}$ for $\alpha > 2$ and $r > \mathrm{e}$. Then 
$$
\ln \left( Q_{2}(r) \right) \ = \ 2 \ln(r) + \alpha \ln^{2} (r) \ \leq \ (2+\alpha) \ln(r) 
$$
holds for $r > \mathrm{e}$. For $k \in (1, \frac{\alpha}{2})$ we argue that
\begin{align*}
\frac{r \left( \ln \left( Q_{2}(r) \right) \right)^{k}}{Q_{2}(r)^{\frac{1}{2}}} \ \leq \ 
\left( 2 + \alpha \right)^{k} \frac{\left( \ln(r) \right)^{k}}{\left( \ln(r) \right)^{\frac{\alpha}{2}}} \ = \ 
\left( 2 + \alpha \right)^{k} \ln(r)^{k-\frac{\alpha}{2}} \ \to \ 0
\end{align*}
follows for $r \to \infty$. Hence, there is a radius $R>0$ such that
\begin{align*}
& \left( \int_{0}^{r} Q_{2}(t)^{\frac{1}{2}} \d t \right) \  \left( \ln \left(  \int_{0}^{r} Q_{2}(t)^{\frac{1}{2}} \d t  \right) \right)^{k}  
\leq r Q_{2}(r)^{\frac{1}{2}} \left( \ln \left( r Q_{2}(r)^{\frac{1}{2}} \right) \right)^{k} \\
& \leq r Q_{2}(r)^{\frac{1}{2}}  \left( \ln \left( Q_{2}(r) \right) \right)^{k} < Q_{2}(r)
\end{align*}
holds for every $r > R$ where we used the monotonicty of $Q_{2}$ and $r^{2} < Q_{2}(r)$. 

\item Let $Q_{3}(r) = r^{2} \left( \ln(r) \right)^{2} \left( \ln^{2}(r) \right)^{\alpha}$ for $\alpha > 2$ and $r > \mathrm{e}^{\mathrm{e}}$.
Then 
\begin{align*}
Q_{3}^{\prime}(r) & = 2 r \left( \ln(r) \right)^{2} \left( \ln^{2}(r) \right)^{\alpha} + 2 r \ln(r) \left( \ln^{2}(r) \right)^{\alpha} 
+ \alpha r \ln(r) \left( \ln^{2}(r) \right)^{\alpha-1} \\
& = 2 \frac{Q_{3}(r)}{r} + 2 \frac{Q_{3}(r)}{r\ln(r)} + \alpha \frac{Q_{3}(r)}{r\ln(r) \ln^{2}(r)}
\end{align*}
holds for every $r > \mathrm{e}^{\mathrm{e}}$. Therefore we easily see that $Q_{3}^{\prime}(r) Q_{3}(r)^{-\frac{3}{2}} \to 0$
follows for $r \to \infty$. Furthermore
\begin{enumerate}[1.)]

\item $\ln \left( Q_{3}(r) \right) = 2 \ln(r) + 2 \left( \ln^{2}(r) \right) + \alpha \left( \ln^{3}(r) \right) \leq (2+2+\alpha) \ln(r)$

\item $\ln^{2} \left( Q_{3}(r) \right) \leq \ln (2+2+\alpha) + \ln^{2}(r) \leq 2 \ln^{2}(r)$

\end{enumerate}
both hold for $r$ large. Hence, due to 1.) and 2.) we argue that
$$
\frac{r f_{k,1}\left( \ln \left( Q_{3}(r) \right) \right)}{Q_{3}(r)^{\frac{1}{2}}} \ \leq \ 2^{k} (2+2+\alpha) 
\frac{r \left( \ln^{2}(r) \right)^{k} \ln(r)}{r \ln(r) \left( \ln^{2}(r) \right)^{\frac{\alpha}{2}}} \ = \
2^{k} (2+2+\alpha) \left( \ln^{2}(r) \right)^{k-\frac{\alpha}{2}}
$$
holds for $r$ large. So, for $k \in (1,\frac{\alpha}{2})$ there exists a radius $R > 0$ such that
$$
0 \ < \ \frac{r f_{k,1}\left( \ln \left( Q_{3}(r) \right) \right)}{Q_{3}(r)^{\frac{1}{2}}} \ < \ 1
$$
holds for every $r \geq R$. All criteria of Theorem \ref{Main_Thm_2} are satisfied.

\item The general case $Q_{4}$ is shown by the same arguments as for $Q_{3}$.

\end{enumerate}

\section{Preparations to prove Intrinsic Ultracontractivity} \label{Definition_h_H}

\subsection{Properties of Schrödinger semigroups}

We still consider Schrödinger operators $H$ in $\mathrm{L}^{2}(\R^{n})$ as defined in Section \ref{definitions_and_preparations}.
Using Propostion 2.5 and Theorem 2.6 on page 50 in \cite{Ouhabaz05} we state the following properties.\\

\begin{lem} \label{positiv_H}
\
\begin{enumerate}[i.)]
\item The set $\mathrm{L}^{2}(\R^{n};\R)$ is invariant under 
$\mathrm{e}^{-tH}$ for $t>0$. We call $\mathrm{e}^{-tH}$ real.
\item The set $\mathrm{L}^{2}(\R^{n};[0,\infty))$ is invariant under $\mathrm{e}^{-tH}$ for $t>0$.
We call $\mathrm{e}^{-tH}$ positive.\\
\end{enumerate}
\end{lem}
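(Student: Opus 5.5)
The plan is to use the Beurling--Deny type characterisation from \cite{Ouhabaz05} (Proposition 2.5 and Theorem 2.6 on page 50). It says that a semigroup generated by a closed, densely defined, accretive (here symmetric, nonnegative) form is real, resp.\ positive, if and only if the form domain is invariant under the corresponding operation and the form satisfies a certain sign condition. The whole proof therefore reduces to checking these conditions for $h$ on $D(h)$. This is legitimate because $h$ is closed: $D(h)$ is complete with respect to $\|\cdot\|_{h}$, as noted in Section \ref{definitions_and_preparations}. It is also densely defined, since $C_{c}^{\infty}(\R^{n}) \subseteq D(h)$.

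For i.), I would verify that $u \in D(h)$ implies $\mathrm{Re}\,u \in D(h)$, and that $h(\mathrm{Re}\,u, \mathrm{Im}\,u) \in \R$. The first follows because $\partial_{j}\mathrm{Re}\,u = \mathrm{Re}\,\partial_{j}u$ in the weak sense, so $\mathrm{Re}\,u \in H^{1}(\R^{n})$. Moreover $q^{1/2}|\mathrm{Re}\,u| \le q^{1/2}|u| \in \mathrm{L}^{2}(\R^{n})$. The second holds since both functions are real-valued and $q$ is real, so every integral in $h(\mathrm{Re}\,u,\mathrm{Im}\,u)$ is real. Proposition 2.5 of \cite{Ouhabaz05} then gives invariance of $\mathrm{L}^{2}(\R^{n};\R)$.

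For ii.), since the semigroup is now known to be real, Theorem 2.6 of \cite{Ouhabaz05} reduces positivity to two conditions for real $u \in D(h)$: first, $u^{+} \in D(h)$; second, $h(u^{+},u^{-}) \le 0$. The key input is the standard lattice property of $H^{1}$ (e.g.\ from \cite{GilbargTrudinger01}, Lemma 7.6): for real $u \in H^{1}(\R^{n})$ we have $u^{+} \in H^{1}(\R^{n})$ with $\nabla u^{+} = \mathbbm{1}_{\{u>0\}}\nabla u$ almost everywhere. Together with $q^{1/2}u^{+} \le q^{1/2}|u|$ this gives $u^{+} \in D(h)$. Then $\nabla u^{+}\cdot\nabla u^{-} = 0$ almost everywhere, because $\nabla u^{-} = -\mathbbm{1}_{\{u<0\}}\nabla u$ is supported where $\nabla u^{+}$ vanishes. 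Likewise $q\,u^{+}u^{-} = 0$ pointwise, so $h(u^{+},u^{-}) = 0 \le 0$.

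The only step needing care is the lattice property together with the formula for the weak gradient of $u^{+}$. I would cite it from the literature rather than reprove it, or derive it by approximating $t \mapsto t^{+}$ with the smooth functions $\sqrt{t^{2}+\epsilon^{2}}-\epsilon$ on $t>0$ and passing to the limit by dominated convergence. Everything else is a direct verification of the abstract criteria.
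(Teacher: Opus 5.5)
Your proposal is correct and follows the same route as the paper, which proves this lemma only by citing Proposition 2.5 and Theorem 2.6 of \cite{Ouhabaz05}. You additionally check the form conditions the paper leaves implicit, namely $\mathrm{Re}\,u \in D(h)$, $h(\mathrm{Re}\,u,\mathrm{Im}\,u)\in\R$, $u^{+}\in D(h)$ and $h(u^{+},u^{-})=0$, together with the closedness and density of $h$.
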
 

Furthermore, for every time $t > 0$ the function $\mathrm{e}^{-tH}u$ is even strictly positive 
almost everywhere in $\R^{n}$ for $u \in \mathrm{L}^{2}(\R^{n})$ being non-negative almost everywhere in $\R^{n}$. 
Hence, we call the operators $\mathrm{e}^{-tH}$ \textit{positivity improving}. The proof of the following theorem 
is found in appendix \ref{e^{-tH}_positivity_improving}.\\

\begin{thm}
For every $t>0$ the operators $\mathrm{e}^{-tH}$ are positivity improving.\\
\end{thm}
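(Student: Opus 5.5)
The plan is to use the standard criterion for positivity improvement of Schrödinger semigroups: compare $\mathrm{e}^{-tH}$ with the free heat semigroup through the Trotter product formula, and use irreducibility, i.e.\ the absence of nontrivial invariant ideals. Fix $t>0$ and $0 \le u \in \mathrm{L}^{2}(\R^{n})$ with $u \neq 0$. By Lemma \ref{positiv_H}, $\mathrm{e}^{-tH}u \geq 0$. We must show that the set $Z=\{x : \mathrm{e}^{-tH}u(x)=0\}$ is a null set.

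\textbf{Step 1: Trotter--Kato approximation with bounded potentials.} Set $q_N = \min(q,N)$ and $H_N = -\Delta + q_N$. Then $H_N \to H$ in strong resolvent sense by monotone convergence of forms, since the forms increase to $h$. Hence $\mathrm{e}^{-tH_N}u \to \mathrm{e}^{-tH}u$ in $\mathrm{L}^{2}$, and this limit is monotonically decreasing in $N$. The decrease follows from the Trotter formula together with $\mathrm{e}^{-s q_N} \geq \mathrm{e}^{-s q_{N+1}}$ pointwise.

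\textbf{Step 2: Localized lower bound.} Working directly with a bounded $q_N$ loses information in the limit, so I would localize. For $R>0$, let $H_R$ be $-\Delta + q$ on the ball $B_R(0)$ with Dirichlet boundary conditions. By domain monotonicity (again via the Trotter formula with the killing operator, or Ouhabaz's domination criterion) we get $\mathrm{e}^{-tH}u \geq \mathrm{e}^{-tH_R}(u\mathbbm{1}_{B_R})$. On $B_R$ the potential satisfies $q \le M_R := \max_{\overline{B_R}} q < \infty$ by continuity. Hence
\[
\mathrm{e}^{-tH_R}(u\mathbbm{1}_{B_R}) \;\geq\; \mathrm{e}^{-tM_R}\, \mathrm{e}^{t\Delta_{D,R}}(u\mathbbm{1}_{B_R}),
\]
which follows from the Trotter product formula, since $\mathrm{e}^{-sq} \geq \mathrm{e}^{-sM_R}$ on $B_R$ and the Dirichlet heat semigroup is positive.

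\textbf{Step 3: Strict positivity of the Dirichlet heat kernel.} The Dirichlet heat kernel $p^{D}_t(x,y)$ of the connected ball is strictly positive on $B_R\times B_R$. This is classical, for instance by the strong maximum principle for the heat equation or the eigenfunction expansion with a positive first eigenfunction. Choose $R$ so large that $u\mathbbm{1}_{B_R}\neq 0$. Then $\mathrm{e}^{t\Delta_{D,R}}(u\mathbbm{1}_{B_R})>0$ everywhere on $B_R$, and therefore $\mathrm{e}^{-tH}u>0$ almost everywhere on $B_R$. Letting $R\to\infty$ along integers gives strict positivity almost everywhere on $\R^{n}$.

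\textbf{Main obstacle.} The hardest step will be the domination inequality between the whole-space semigroup and the Dirichlet ball semigroup, with the Trotter formula justified for unbounded $q$. My first approach is Ouhabaz's criterion from \cite{Ouhabaz05}. One semigroup $S$ dominates another $T$ if the form domain $D(h_R)$ is an ideal of $D(h)$ and $h(u,v)\le h_R(u,v)$ for nonnegative $u,v$ in the smaller domain. Both conditions are routine here, because $H^{1}_0(B_R)$ functions extended by zero lie in $D(h)$ and the two forms coincide on them.

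Alternatively, one can bypass domination by invoking Ouhabaz's irreducibility theorem directly. For a positive symmetric semigroup, positivity improvement follows from irreducibility. Irreducibility means that the only closed ideals $\mathrm{L}^{2}(\Omega)$ invariant under the semigroup have $\Omega$ null or conull. Invariance of such an ideal requires $\mathbbm{1}_\Omega v\in D(h)$ for all $v\in D(h)$. Taking $v\in C_c^\infty$ with $v=1$ on a ball, one obtains $\mathbbm{1}_\Omega\in H^{1}_{loc}$, hence a locally constant indicator, hence $\Omega$ is trivial because $\R^{n}$ is connected.
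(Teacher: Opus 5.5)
Your main argument is correct, but it takes a different route from the paper; the paper's proof is essentially your closing ``alternative''.

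The paper first checks that $h$ is local, i.e.\ $h(u,v)=0$ for disjoint supports. It then invokes Ouhabaz's Corollary 2.11: for local forms, positivity improving is equivalent to every $\Omega$ with $1_{\Omega}D(h)\subseteq D(h)$ being null or conull. It excludes a nontrivial $\Omega$ as follows: it finds a point $x_0$ at which $\Omega$ and $\Omega^{C}$ both have positive measure in every ball, takes $u\in C_c^\infty(\R^n)$ with $u(x_0)=1$, and argues that $1_\Omega u\in W^{1,p}$ for large $p$ would be continuous yet jumps at $x_0$.

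You instead prove the pointwise chain
\[
\mathrm{e}^{-tH}u\;\ge\;\mathrm{e}^{-tH}(u1_{B_R})\;\ge\;\mathrm{e}^{-tH_R}(u1_{B_R})\;\ge\;\mathrm{e}^{-tM_R}\,\mathrm{e}^{t\Delta_{D,R}}(u1_{B_R})\;>\;0 \quad \text{on } B_R ,
\]
using positivity, domination by the Dirichlet ball semigroup, the Trotter formula with the bounded potential $q|_{B_R}\le M_R$, and strict positivity of the Dirichlet heat kernel. Every link is valid.

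What your route buys:
\begin{itemize}
\item an explicit, local, quantitative lower bound;
\item independence from the Perron--Frobenius/irreducibility theorem.
\end{itemize}
What it costs:
\begin{itemize}
\item heavier classical input, namely a domination criterion and the parabolic strong maximum principle;
\item local boundedness of $q$, which is available here only because $q$ is continuous.
\end{itemize}
The paper's form-theoretic argument works verbatim for any $0\le q\in\mathrm{L}^{1}_{loc}(\R^n)$.

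A few things to tighten:
\begin{itemize}
\item Step 1 is never used and can be dropped.
\item The ideal condition in Ouhabaz's domination criterion is more than the inclusion $H^1_0(B_R)\subseteq D(h)$. It also requires that functions of $D(h)$ dominated in modulus by some $u\in H^1_0(B_R)$ lie in $H^1_0(B_R)$ after multiplication by $\mathrm{sign}\,u$. This is true for the ball, but it has to be said.
\item For $p^{D}_t>0$ at every $t>0$, cite the strong maximum principle. The positive first eigenfunction alone does not give this for small $t$.
\item In your alternative, ``$1_\Omega\in H^1_{loc}$, hence locally constant'' needs one line. For example, $w=w^2$ gives $(1-2w)\nabla w=0$ with $|1-2w|=1$, so $\nabla w=0$ a.e. This is actually cleaner than the paper's identification $\partial_j(1_\Omega u)=1_\Omega\partial_j u$ via test functions in $C_c^\infty(\Omega)$, which tacitly treats the measurable set $\Omega$ as open.
\end{itemize}
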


\subsection{Weighted $L^{p}$-spaces and weighted Schrödinger Semigroups}

We follow B. Davies in Section 4.2 on page 111 in \cite{Davies07} by introducing weighted Lebesgue 
space $\mathrm{L}^{p}_{\mu}(\R^{n})$ for $p \in [1,\infty)$ and weighted Schrödinger semigroups 
on $\mathrm{L}^{2}_{\mu}(\R^{n})$. For any Borel set $B$ in $\R^{n}$ we define a probability measure 
$\mu$ on $\R^{n}$ by 
$$
\mu (B) = \int_{B} \varphi (x)^{2} \d x \in [0,1]
$$
and for every $p \in [1,\infty)$ we denote \textit{weighted Lebesgue function spaces} $\mathrm{L}_{\mu}^{p}(\R^{n})$ 
as the set of measurable functions $u \colon \R^{n} \to \C$ that satisfy
$$
\int_{\R^{n}} |u(x)|^{p} \d \mu(x) = 
\int_{\R^{n}} |u(x)|^{p} \ \varphi (x)^{2} \d x < \infty.
$$ 
Furthermore, $\mathrm{L}_{\mu}^{\infty}(\R^{n})$ is defined as $\mathrm{L}^{\infty}(\R^{n})$ which is justified since 
Lebesgue null sets with respect to $\mu$ coincide with null sets to the regular Lebesgue measure on $\R^{n}$ 
as $\varphi$ is strictly positive in $\R^{n}$. Setting
$$
\| u \|_{p, \mu} = \left( \int_{\R^{n}} |u(x)|^{p} \ \mathrm{d}\mu(x) \right)^{\frac{1}{p}}
$$
for $u \in \mathrm{L}_{\mu}^{p}(\R^{n})$ turns $\mathrm{L}_{\mu}^{p}(\R^{n})$ into Banach spaces. For 
$u \in \mathrm{L}^{2}_{\mu}(\R^{n})$ we define a $C_{0}$-semigroup of contractions in $\mathrm{L}^{2}_{\mu}(\R^{n})$ by
\begin{equation}
\mathrm{e}^{-t\tilde{H}}u (x)  = \frac{1}{\varphi(x)}\mathrm{e}^{-tH}(\varphi u) (x)
\end{equation}
which we call the \textit{weighted Schrödinger semigroup} of $H$. The generator is given by 
\begin{equation}
-\tilde{H}u \ = \ - \frac{1}{\varphi} H (\varphi u) 
\end{equation} 
for $u \in D( \tilde{H}) = \varphi^{-1} D\left( H \right) \subseteq \mathrm{L}^{2}_{\mu}(\R^{n})$ where we use a sloppy
notation by writing $\varphi^{-1} D\left( H \right)$ for the set of products of $\varphi^{-1}$ with every function of 
the domain $D\left( H \right)$.\\

\begin{thm}\label{Contraction_weightedSemigroup} \ 
\begin{enumerate}[i.)]
\item For every $t > 0$ the operator $\mathrm{e}^{-t\tilde{H}}$ is a contraction in $\mathrm{L}_{\mu}^{1}(\R^{n}, \R)$.
\item For every $t > 0$ the operator $\mathrm{e}^{-t\tilde{H}}$ is a contraction in $\mathrm{L}_{\mu}^{\infty}(\R^{n}, \R)$.
\item For every $t>0$ and $p \in (1,\infty)$ the operator $\mathrm{e}^{-t\tilde{H}}$ is a contraction in 
$\mathrm{L}_{\mu}^{p}(\R^{n}, \R)$.
\end{enumerate}
\end{thm}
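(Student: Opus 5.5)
We prove the $\mathrm{L}^{\infty}_{\mu}$ bound ii.) first, deduce i.) from it by duality, and then obtain iii.) by interpolation. Throughout we use that $\mathrm{e}^{-tH}$ is self-adjoint and positive (Lemma \ref{positiv_H}), and that $\mathrm{e}^{-tH}\varphi = \mathrm{e}^{-tE_{0}}\varphi$ because $\varphi$ is an eigenfunction with eigenvalue $E_{0}$. The key identity is therefore $\mathrm{e}^{-t\tilde{H}}\mathbf{1} = \mathrm{e}^{-tE_{0}}\mathbf{1}$, where $\mathbf{1}$ is the constant function one. It lies in $\mathrm{L}^{2}_{\mu}(\R^{n})$ because $\mu$ is a probability measure.

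\emph{Step ii.)} Let $u \in \mathrm{L}^{2}_{\mu}(\R^{n},\R) \cap \mathrm{L}^{\infty}(\R^{n})$ with $\|u\|_{\infty} \leq 1$. Then $\varphi \pm \varphi u \geq 0$ almost everywhere and lies in $\mathrm{L}^{2}(\R^{n})$. Positivity of $\mathrm{e}^{-tH}$ gives
\[
\left|\mathrm{e}^{-tH}(\varphi u)\right| \leq \mathrm{e}^{-tH}\varphi = \mathrm{e}^{-tE_{0}}\varphi \leq \varphi ,
\]
using $E_{0}\geq 0$. Dividing by $\varphi>0$ (Lemma \ref{strict_positivity_ground_state}) yields $\|\mathrm{e}^{-t\tilde{H}}u\|_{\infty}\leq \|u\|_{\infty}$. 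Since $\mu$ is finite, $\mathrm{L}^{\infty}\subset \mathrm{L}^{2}_{\mu}$, so $\mathrm{e}^{-t\tilde H}$ is already defined on all of $\mathrm{L}^{\infty}_{\mu}$ and no extension is needed. Realness follows from Lemma \ref{positiv_H} i.).

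\emph{Step i.)} The map $u \mapsto \varphi u$ is unitary from $\mathrm{L}^{2}_{\mu}$ onto $\mathrm{L}^{2}$, so $\mathrm{e}^{-t\tilde{H}}$ is self-adjoint and positive on $\mathrm{L}^{2}_{\mu}$. For $u \in \mathrm{L}^{1}_{\mu}\cap\mathrm{L}^{2}_{\mu}$ and $v \in \mathrm{L}^{\infty}$ with $\|v\|_\infty\le 1$, Step ii.) gives
\[
\left|\int (\mathrm{e}^{-t\tilde H}u)\, v \d\mu\right| = \left|\int u\, (\mathrm{e}^{-t\tilde H}v) \d\mu\right| \leq \|u\|_{1,\mu}.
\]
Taking the supremum over $v$ gives $\|\mathrm{e}^{-t\tilde{H}}u\|_{1,\mu}\le\|u\|_{1,\mu}$. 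We then extend by density of $\mathrm{L}^{1}_{\mu}\cap\mathrm{L}^{2}_{\mu}$ (for instance truncations) in $\mathrm{L}^{1}_{\mu}$. One could alternatively use $\int \mathrm{e}^{-t\tilde H}|u|\d\mu = \langle \mathrm{e}^{-tH}(\varphi|u|),\varphi\rangle = \mathrm{e}^{-tE_0}\int|u|\d\mu$ together with $|\mathrm{e}^{-t\tilde H}u|\le \mathrm{e}^{-t\tilde H}|u|$.

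\emph{Step iii.)} Apply the Riesz--Thorin interpolation theorem between the $\mathrm{L}^{1}_{\mu}$ and $\mathrm{L}^{\infty}_{\mu}$ bounds, both with norm $\le 1$. The operators agree on $\mathrm{L}^{1}_{\mu}\cap\mathrm{L}^{\infty}$, which is dense in each $\mathrm{L}^{p}_{\mu}$ for $p<\infty$, and this gives contractivity on every $\mathrm{L}^{p}_{\mu}$, $1<p<\infty$.

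The main obstacle is technical bookkeeping rather than any estimate. We have to ensure the semigroup is consistently defined on $\mathrm{L}^{1}_{\mu}$, which is larger than $\mathrm{L}^{2}_{\mu}$, by extension from the dense subspace. We must also justify the positivity step, which needs $\varphi u\in\mathrm{L}^{2}$; this holds because $\varphi\in\mathrm{L}^{2}$.
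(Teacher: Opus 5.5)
Your proof is correct. It uses the same ingredients as the paper: positivity and self-adjointness of $\mathrm{e}^{-tH}$, the relation $\mathrm{e}^{-tH}\varphi=\mathrm{e}^{-tE_{0}}\varphi$ with $E_{0}\geq 0$, extension by density, and Riesz--Thorin. You arrange them differently, in two places.

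\textbf{Part i.)} The paper proves i.) first and independently of ii.). For $u\geq 0$ it computes $\int \mathrm{e}^{-t\tilde{H}}u\,\mathrm{d}\mu=\langle \varphi u,\mathrm{e}^{-tH}\varphi\rangle=\mathrm{e}^{-tE_{0}}\int u\,\mathrm{d}\mu$, and then splits $u=u^{+}-u^{-}$. This is exactly the alternative you mention at the end of your Step i.). Your main route instead derives i.) from ii.) by duality, using that $\mathrm{e}^{-t\tilde{H}}$ is self-adjoint on $\mathrm{L}^{2}_{\mu}$ via the unitary map $u\mapsto\varphi u$. This shows that, for a self-adjoint operator, the $\mathrm{L}^{1}_{\mu}$ and $\mathrm{L}^{\infty}_{\mu}$ bounds are a single statement. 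The cost is that i.) now depends on ii.).

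\textbf{Part ii.)} The paper invokes the positivity-improving property of $\mathrm{e}^{-tH}$, which it proves in the appendix via the irreducibility criterion, and then splits into positive and negative parts again. Your sandwich $-\varphi\le\varphi u\le\varphi$ needs only the plain positivity of Lemma \ref{positiv_H}. It also handles sign-changing $u$ in one step. So your version of the theorem does not rely on the appendix at all.

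\textbf{Minor point (applies to both).} Riesz--Thorin is a theorem for complex scalars. To conclude for the real spaces in the statement, note that the endpoint bounds also hold for complex $u$. This follows from $|\mathrm{e}^{-t\tilde{H}}u|\le \mathrm{e}^{-t\tilde{H}}|u|$, which holds by positivity.
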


\begin{proof} \
\begin{enumerate}[i.)]
\item Let $t > 0$ be arbitrary but fixed.
\begin{enumerate}[a)]
\item Let $u \in \mathrm{L}_{\mu}^{2}(\R^{n}, \R) \subseteq \mathrm{L}_{\mu}^{1}(\R^{n}, \R)$ be non-negative almost everywhere 
in $\R^{n}$. So we reason that
\begin{align*}
& \int_{\R^{n}} |\mathrm{e}^{-t\tilde{H}}u  \ (x)| \d \mu (x) 
= \int_{\R^{n}} \mathrm{e}^{-t\tilde{H}}u \ (x) \d \mu (x) = \left\langle  \mathrm{e}^{-tH} \varphi u, \varphi \right\rangle 
= \left\langle \varphi u,  \mathrm{e}^{-tH} \varphi \right\rangle \\
& \\
& =  \left\langle \varphi u,   \mathrm{e}^{-tE_{0}} \varphi \right\rangle 
=  \mathrm{e}^{-tE_{0}} \left\langle \varphi u,  \varphi \right\rangle \leq \| u \|_{1,\mu} 
\end{align*}
is true where we used $\mathrm{e}^{-tH} \varphi = \mathrm{e}^{-tE_{0}} \varphi$ and $E_{0} \geq 0$.
\item For $u \in \mathrm{L}_{\mu}^{2}(\R^{n}, \R)$ we denote the positive and negative part of $u$
by $u^{+}$ and $u^{-}$ respectively with $u = u^{+} - u^{-}$. Using a) we end up with
\begin{align*}
& \int_{\R^{n}} |\mathrm{e}^{-t\tilde{H}}u \ (x)| \d \mu (x) 
\leq \int_{\R^{n}} |\mathrm{e}^{-t\tilde{H}}u^{+} \ (x)| \d \mu (x) 
+ \int_{\R^{n}} |\mathrm{e}^{-t\tilde{H}}u^{-} \ (x)| \d \mu (x) \\
& \leq \int_{\R^{n}} u^{+} (x) \d \mu (x) + \int_{\R^{n}} u^{-} (x) \d \mu (x) 
 = \int_{\R^{n}} |u (x)| \d \mu (x) = \| u \|_{1,\mu}.
\end{align*}
\item Now let $u \in \mathrm{L}_{\mu}^{1}(\R^{n}, \R)$. We define a sequence
$u_{k} = 1_{\{ u \leq k \}} \ u$ for $k \in \N$. Mind that $u_{k}$ is a bounded function on $\R^{n}$ and therefore is contained in 
$\mathrm{L}_{\mu}^{2}(\R^{n}, \R)$ since $\mu$ is a probability measure. Furthermore $(u_{k})$ converges to $u$ in 
$\mathrm{L}_{\mu}^{1}(\R^{n})$ by Lebesgue's theorem 
since $u_{k}$ converges pointwise almost everywhere to $u$ in $\R^{n}$ and 
$|u_{k} - u| \leq 2|u| \in \mathrm{L}_{\mu}^{1}(\R^{n})$ is true as well. So using b) we conclude that 
$(\mathrm{e}^{-t\tilde{H}}u_{k})$ is a Cauchy sequence in $\mathrm{L}_{\mu}^{1}(\R^{n})$. Hence there 
exists a unique and bounded extension of $\mathrm{e}^{-t\tilde{H}}$ to $\mathrm{L}_{\mu}^{1}(\R^{n})$ that 
proves the claim.
\end{enumerate}
\item Let $t > 0$ be chosen arbitrary but fixed and 
$u \in \mathrm{L}_{\mu}^{\infty}(\R^{n}, \R) \subseteq \mathrm{L}_{\mu}^{2}(\R^{n}, \R)$ be non-negative 
almost everywhere in $\R^{n}$. Using that $\mathrm{e}^{-tH}$ is positivity improving we argue that
\begin{align*}
0 & < \mathrm{e}^{-tH} \left( \varphi \| u \|_{\infty} - \varphi u \right) (x) = 
\mathrm{e}^{-tH} \left( \varphi \| u \|_{\infty}\right) (x) - \mathrm{e}^{-tH} \left( \varphi u \right) (x) \\
\ \\
& =
\| u \|_{\infty} \mathrm{e}^{-tE_{0}} \varphi(x) - \mathrm{e}^{-tH} \left( \varphi u \right) (x) 
\end{align*}
holds for almost every $x \in \R^{n}$. Hence,
$$
0 < \mathrm{e}^{-tH} \left( \varphi u \right) (x) < \| u \|_{\infty} \mathrm{e}^{-tE_{0}} \varphi(x) = 
\frac{\| u \|_{\infty}}{\mathrm{e}^{tE_{0}}} \ \varphi(x)
$$
is implied for almost every $x \in \R^{n}$ which gives $\| \mathrm{e}^{-t\tilde{H}} u \|_{\infty} < \mathrm{e}^{-tE_{0}} \| u \|_{\infty}$
for the weighted Schrödinger semigroup $\mathrm{e}^{-t\tilde{H}}$. For $u \in \mathrm{L}_{\mu}^{\infty}(\R^{n}, \R)$ not necesserarily
non-negative we write $u = u^{+} - u^{-}$ and argue as in i.) to prove the claim.
\item Using i.) and ii.) the claim is implied by the Riesz-Thorin interpolation theorem.
\end{enumerate}
\end{proof}

\subsection{Rosen's lemma} \label{Rosen_lemma}

Let the ground state $\varphi$ of $H$ satisfy Rosen inequalities, i.e.
\begin{equation} \label{Rosen_inequalities}
-\ln \left( \varphi (x) \right) \ \leq \ \varepsilon q(x) + \gamma(\varepsilon)
\end{equation} 
for every $\varepsilon > 0$ and an associated $\gamma(\varepsilon) > 0$. Furthermore let
$u \in D(\tilde{H}) \cap \mathrm{L}^{\infty}_{\mu}(\R^{n})$ be non-negative almost everywhere in $\R^{n}$.
Then for every $\varepsilon > 0$ the Logarithmic Sobolev inequalities of $\tilde{H}$ 
\begin{align*}
& \int_{\R^{n}} \left( \mathrm{e}^{-t\tilde{H}} u(x) \right)^{p} \ \ln \left( \mathrm{e}^{-t\tilde{H}} u (x) \right) \d \mu(x) \leq \\ 
& \varepsilon \ 
\langle \tilde{H} \mathrm{e}^{-t\tilde{H}} u, (\ \mathrm{e}^{-t\tilde{H}} u \ )^{p-1} \rangle_{\mu} 
 + \frac{2\beta(\varepsilon)}{p} \ \| \mathrm{e}^{-t\tilde{H}} u \|_{p, \mu}^{p} + \| \mathrm{e}^{-t\tilde{H}} u \|_{p, \mu}^{p}  \ln  \| \mathrm{e}^{-t\tilde{H}} u \|_{p, \mu}
\end{align*}
are implied with $\beta(\varepsilon) = \frac{\varepsilon}{2} - \frac{n}{4} \ln( \frac{\varepsilon}{2}) + \gamma( \frac{\varepsilon}{2}) + C$
and a constant $C \in \R$. Please compare with Corollary 4.4.2 on page 118 first and then with Lemma 2.2.6 on page
67 in \cite{Davies07}.\\

\section{Intrinsic ultracontractivity of $\mathrm{e}^{-tH}$} \label{IU_e^-tH}

In the following we refer to potentials $q \colon \R^{n} \to (0,\infty)$ of Theorem \ref{Main_Thm_2} such that
there exists a radius $R_{m} > 0$, $k>1$, $d \in (0,1]$ and $m \in \N$ with 
\begin{equation}
d \left( \int_{0}^{ |x|} Q(t)^{\frac{1}{2}} \ \mathrm{d}t \right) \
f_{k, m-1} \left( \ln \left( \int_{0}^{ |x|} Q(t)^{\frac{1}{2}} \ \mathrm{d}t \right) \right) \ \leq q(x) \ \leq \ Q \left( |x| \right)
\end{equation}
for every $|x| \geq R_{m}$ where $f_{k, m-1}$ is given by
\begin{equation}
f_{k, m-1} (t) = \left( \ln^{(m-1)}(t) \right)^{k} \ \prod_{p=0}^{m-2} \ln^{(p)}(t).
\end{equation} 
Define $f \colon \R \to (0,\infty)$ by
$$
f(q)=\left\{\begin{array}{ll} 
	f_{k,m-1}(q), & q \geq r_{0} \\
         f_{k,m-1}\left( r_{0} \right)\mathrm{e}^{\frac{q}{r_{0}}-1}, & q < r_{0}
\end{array}\right. 
$$
with $r_{0} = \ln \left( \int_{0}^{R_{m}} Q(t)^{\frac{1}{2}} \ \mathrm{d}t \right)$ and $g$ as the inverse of 
$f \circ \ln$ in $(0,\infty)$. Then the ground state 
$\varphi$ satisfies Rosen inequalities with
\begin{equation} \label{gamma}
\gamma(\varepsilon) = \sqrt{2} \ g\left( \frac{\sqrt{2}}{d \cdot \varepsilon} \right) + C
\end{equation}
for a constant $C > 0$. For details please return to Section \ref{Proofs_for_Rosen_Inequalities}.\\

\begin{thm}\label{IU_of_e^{-tH}}
Let $q$ be as described above then the Schrödinger semigroup $\mathrm{e}^{-tH}$ is 
intrinsic ultracontractive in $\mathrm{L}^{2}(\R^{n})$, i.e.
\begin{equation}
\forall t > 0 \exists C_{t} > 0 \ : \  \bigl| \mathrm{e}^{-tH}u (x) \bigr| \ \leq \ C_{t} \|u\|_{2} \ \varphi (x) \\
\end{equation}
holds almost everywhere in $\R^{n}$ for every $u \in \mathrm{L}^{2}(\R^{n})$.\\
\end{thm}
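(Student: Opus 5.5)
The plan is to follow the classical Davies–Simon route: time-dependent Logarithmic Sobolev inequalities imply ultracontractivity of the weighted semigroup $\mathrm{e}^{-t\tilde{H}}$, and this translates back into intrinsic ultracontractivity of $\mathrm{e}^{-tH}$. Concretely, I would show that for every $t>0$ there is $M_t<\infty$ with
\begin{equation*}
\| \mathrm{e}^{-t\tilde{H}} v \|_{\infty} \ \leq \ M_{t} \, \| v \|_{2,\mu}
\end{equation*}
for all $v \in \mathrm{L}^{2}_{\mu}(\R^{n})$. Given this, take $u \in \mathrm{L}^{2}(\R^{n})$ and set $v = u/\varphi$, so that $\|v\|_{2,\mu} = \|u\|_{2}$. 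Then $\mathrm{e}^{-tH}u = \varphi\, \mathrm{e}^{-t\tilde{H}} v$, and hence $|\mathrm{e}^{-tH}u(x)| \leq M_t \|u\|_2 \varphi(x)$ almost everywhere. It suffices to treat non-negative $v$, since by Lemma \ref{positiv_H} we may split $u$ into real and imaginary parts and then into positive and negative parts.

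\textbf{Step 1: differential inequality for $L^p$ norms.} Fix $0\le u\in D(\tilde H)\cap \mathrm{L}^\infty_\mu$ and write $u_t=\mathrm{e}^{-t\tilde H}u$. Choose an increasing exponent function $p(t)$ with $p(0)=2$ and $p(t)\to\infty$ as $t\to T$. Differentiating $\ln\|u_t\|_{p(t),\mu}$ gives
\begin{equation*}
\frac{\d}{\d t}\ln\|u_t\|_{p(t),\mu} = \frac{p'(t)}{p(t)^{2}\|u_t\|_{p,\mu}^{p}}\int u_t^{p}\ln\frac{u_t^{p}}{\|u_t\|_{p,\mu}^{p}}\d\mu - \frac{1}{\|u_t\|_{p,\mu}^{p}}\langle \tilde H u_t, u_t^{p-1}\rangle_\mu .
\end{equation*}
I would insert the Logarithmic Sobolev inequalities of Section \ref{Rosen_lemma}, applied with the time-dependent parameter $\varepsilon=\varepsilon(t)$ chosen so that $p'(t)\varepsilon(t)/p(t)$ equals $1$ (up to the harmless constant factor relating $u^p\ln u^p$ to $p\,u^p\ln u$). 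This choice cancels the energy term $\langle\tilde H u_t,u_t^{p-1}\rangle_\mu$, and that term is non-negative in any case because $\tilde H$ is sub-Markovian. The remainder is
\begin{equation*}
\frac{\d}{\d t}\ln\|u_t\|_{p(t),\mu}\le \frac{2 p'(t)\,\beta(\varepsilon(t))}{p(t)^{2}}.
\end{equation*}
Integrating from $0$ to $T$ yields $\|u_T\|_{\infty}\le \exp\bigl(\int_0^T 2p'\beta(\varepsilon)/p^2\,\d t\bigr)\|u\|_{2,\mu}$. Density of $D(\tilde H)\cap \mathrm{L}^\infty_\mu$, together with the $L^p_\mu$ contractivity in Theorem \ref{Contraction_weightedSemigroup}, then extends the bound to all of $\mathrm{L}^2_\mu$. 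Justifying the differentiation, in particular that $u_t^{p-1}$ lies in the form domain, is routine and I would handle it via Davies' Lemma 2.2.6.

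\textbf{Step 2 (main obstacle): finiteness of the integral.} Substitute $s = p(t)$ and $\varepsilon(t) = p\,\d t/\d p$. The condition needed becomes
\begin{equation*}
\int_2^\infty \frac{\beta(\varepsilon(p))}{p^2}\,\d p<\infty \quad\text{and}\quad T=\int_2^\infty\frac{\varepsilon(p)}{p}\,\d p<\infty ,
\end{equation*}
where $T$ can be made arbitrarily small. Since $\beta(\varepsilon)\approx \gamma(\varepsilon/2)-\tfrac n4\ln\varepsilon$ and $\gamma(\varepsilon)=\sqrt2\,g(\sqrt2/(d\varepsilon))+C$, this reduces to a statement about $g$, the inverse of $f\circ\ln$. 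This is the Davies–Simon criterion: ultracontractivity holds if $\int^\infty \gamma(\varepsilon)$-type integrals converge. Substituting $y=g(b)$, so that $b=f(\ln y)$, the convergence of $\int^\infty g(b)\,b^{-2}\,\d b$ becomes convergence of an integral of the form $\int^\infty y\, f'(\ln y)/(y\,f(\ln y)^2)\,\d y$, which after $s=\ln y$ is $\int^\infty f'(s)/f(s)^{2}\,\d s$-like and finite. More carefully, I would use Davies' criterion in the equivalent form that $\int_{R}^\infty \d s/f_{k,m}(s)<\infty$, which is exactly Remark \ref{Remark 1} b). I therefore expect the key computation to be choosing $\varepsilon(p)$ explicitly, for instance $\varepsilon(p)=c/(\ln p\cdot f_{k,m-1}$-correction$)$ modelled on $1/f_{k,m}(\ln p)$. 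The goals are that $\int\varepsilon(p)\,\d p/p$ converges, which uses $k>1$, and that $g(1/\varepsilon(p))\lesssim p$ up to factors making $\int g/p^2$ converge. The $-\frac n4\ln\varepsilon$ term contributes only $\int \ln(1/\varepsilon(p))p^{-2}\,\d p<\infty$. Once $T$ can be made arbitrarily small by shifting the starting point $p_0\ge 2$ (using $\mathrm{L}^2_\mu\to \mathrm{L}^{p_0}_\mu$ contractivity first is not available, so instead I would scale $\varepsilon$ by a small constant $\delta$), every $t>0$ is covered by the semigroup property and $L^\infty_\mu$ contractivity.
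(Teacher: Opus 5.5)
Your architecture matches the paper's. You use the Logarithmic Sobolev inequality from Rosen's lemma and an exponent with $p'=p/\varepsilon$, so the energy term cancels. This gives the bound $\exp\bigl(2\int_2^\infty\beta(\varepsilon(p))p^{-2}\,\d p\bigr)$ with $T=\int_2^\infty\varepsilon(p)p^{-1}\,\d p$, and you translate back through $v=u/\varphi$.

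\textbf{The gap is in Step 2, which rests on a false computation.} With $b=f(\ln y)$ you have $\d b=f'(\ln y)\,\d y/y$, so $\int^\infty g(b)b^{-2}\,\d b=\int^\infty f'(s)f(s)^{-2}\mathrm{e}^{s}\,\d s$. You dropped the factor $\mathrm{e}^{s}$ coming from $\d y=\mathrm{e}^{s}\d s$, and the integral in fact diverges:
\begin{itemize}
\item for $m=1$, $f(s)=s^k$ and $g(b)=\exp(b^{1/k})$;
\item for $m\geq 2$, still $g(b)\geq\exp(b^{1/k})$ for large $b$, since $f_{k,m-1}(s)\leq s^k$ eventually.
\end{itemize}
Equivalently, $\int_0\beta(\varepsilon)\,\d\varepsilon=\infty$ for this $\beta$. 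So the standard choice $\varepsilon(p)=2t/p$, which turns the exponent into $\frac{1}{t}\int_0^t\beta(\varepsilon)\,\d\varepsilon$, is useless here. Likewise, $\int^\infty \d s/f_{k,m}(s)<\infty$ is not an ``equivalent form'' of that criterion. The theorem hinges on a tailored $\varepsilon(p)$, and that is exactly the computation you leave at ``I expect''.

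\textbf{Your suggested choice does work; here is the missing check.} Take $\varepsilon(p)=\delta/f_{k,m}(\ln p)$ for large $p$, continued positively for small $p$.
\begin{itemize}
\item Time integral: $\int\varepsilon(p)\,\d p/p=\delta\int \d s/f_{k,m}(s)<\infty$, because $\frac{\d}{\d s}(\ln^{(m)}s)^{1-k}=(1-k)/f_{k,m}(s)$.
\item The $g$-term: since $f_{k,m}(s)=s\,f_{k,m-1}(\ln s)$ and $k>1$, for every constant $K$ one has $K f_{k,m}(\ln p)\leq f_{k,m-1}(\tfrac12\ln p)$ for large $p$. Hence the $g$-term $g\bigl(2\sqrt2/(d\varepsilon(p))\bigr)$ in $\beta(\varepsilon(p))$ is at most $g\bigl(f(\tfrac12\ln p)\bigr)=p^{1/2}$.
\item The logarithmic term: $\ln(1/\varepsilon(p))=O(\ln\ln p)$.
\end{itemize}
So $\int_2^\infty\beta(\varepsilon(p))p^{-2}\,\d p<\infty$ for every $\delta>0$, while $T\to 0$ as $\delta\to 0$.

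\textbf{Comparison with the paper.} The paper takes $\varepsilon_t(p)=1/\bigl(\sqrt2\,d\,f(\tfrac12\ln p+\xi(t))\bigr)$. This choice is tight: the $g$-term becomes exactly $\mathrm{e}^{\xi(t)}p^{1/2}$, so small times must be reached through the shift $\xi(t)$. Rescaling by a small $\delta$ would fail there; for $m=1$ it yields a power of $p$ of order $\delta^{-1/k}$, which is not integrable against $p^{-2}$. Your choice leaves room, so both rescaling and the numerical constants inside $\gamma(\varepsilon/2)$ are harmless; with the paper's choice those constants have to be matched exactly. The remaining differences are cosmetic: you extend to all of $\mathrm{L}^2_\mu$ before translating back, and you use $\mathrm{L}^\infty_\mu$- rather than $\mathrm{L}^2_\mu$-contractivity for large $t$.
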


\section{Proof of Theorem \ref{IU_of_e^{-tH}}} \label{Proof_IU}

\subsection{Preliminary} \label{Preliminary_Proof_IU}

We define $T = \frac{\sqrt{2}}{d} \int_{\frac{1}{2}\ln(2)}^{\infty} f(r)^{-1} \d r \ \in (0,\infty)$. Please compare with
Appendix \ref{Miscellaneous} for details on the integration. Let $t \in (0,T]$ be fixed. Then
there exists a $\xi (t) \geq 0$ with
$$
 \frac{\sqrt{2}}{d} \int_{\frac{1}{2}\ln(2) + \xi (t)}^{\infty} f(r)^{-1} \d r \ = \ t
$$
by the intermediate value theorem. Please mind that $\xi$ is a decreasing function with $\xi (t) \to \infty$ for $t \to 0$.
We define a function

\begin{equation}
\varepsilon_{t}(p) = \frac{1}{\sqrt{2} d \ f( \frac{1}{2}\ln(p) + \xi (t))}
\end{equation}
for $p \in [2,\infty)$. Then
\begin{equation} \label{limit_G}
\int_{2}^{\infty} \frac{\varepsilon_{t}(p)}{p} \d p = 
\frac{\sqrt{2}}{d} \int_{\frac{1}{2}\ln(2) + \xi (t)}^{\infty} f(r)^{-1} \d r = t
\end{equation}
is implied. We define $p \colon [0,t) \to [2,\infty)$ as a solution of the initial value problem
$$
\left\{
\begin{array}{ll} 
p^{\prime}(s) & = \frac{p(s)}{\varepsilon_{t}( p(s))} \ \text{for } s \in (0,t) \\
p(0) & = 2
\end{array}\right. 
$$
by separation of variables. Therefore, $G \colon [2,\infty) \to \R$ is defined by
$$
G(r) = \int_{2}^{r} \frac{\varepsilon_{t}(s)}{s} \d s.
$$
Then $G^{\prime}(r) = \frac{\varepsilon_{t}(r)}{r} > 0$ is true for every $r \geq 2$ and so $G$ is strictly monotone increasing
and continuous on $[2,\infty)$ with $G(2)=0$. Using (\ref{limit_G}) we state $G(r) \to t$ for $r \to \infty$. 
Hence $G$ is a bijection from $[2,\infty)$ to $[0,t)$ and we define $p$ as the inverse function $G^{-1}$ of $G$ on $[0,t)$. 
Hence $p$ satisfies $p(0)=2$ and
\begin{equation}
p^{\prime}(s) = \frac{1}{G^{\prime}\bigl( p(s) \bigr)} = \frac{p(s)}{\varepsilon_{t}(p(s))}
\end{equation}
for every $s \in [0,t)$ by the inverse function theorem. Finally, we define a function $N$ by
\begin{equation}
N(s) \ = \ 2 \int_{0}^{s} \frac{\beta\left( \varepsilon_{t}\left( p(r) \right) \right)}{\varepsilon_{t}\left( p(r) \right) p(r)} \d r
\ = \ 2\int_{2}^{p(s)} \frac{\beta\left( \varepsilon_{t}(q) \right)}{q^{2}} \d q
\end{equation}
for $s \in [0,t)$. Using Appendix \ref{Miscellaneous} we define the limit of $N$ for $s \to t$ by $M(t) = \lim_{s \to t} N(s)$
since $p(s) \to \infty$ for $s \to t$.\\

\subsection{The core argument}

Let $u \in D(\tilde{H}) \cap \mathrm{L}_{\mu}^{\infty}(\R^{n})$ be non-negative almost everywhere in $\R^{n}$. 
Due to the definition of weighted Schrödinger semigroups, 
$\mathrm{e}^{-t\tilde{H}}u \in \mathrm{L}_{\mu}^{\infty}(\R^{n}) \cap \mathrm{L}_{\mu}^{p}(\R^{n})$ is implied
for every $p \in [2,\infty)$ since $\mu$ is a probability measure. Furthermore,
\begin{equation}
\|\mathrm{e}^{-t\tilde{H}}u \|_{\infty, \mu}  = \lim_{s \to t} \|\mathrm{e}^{-t\tilde{H}}u \|_{p(s), \mu} 
\end{equation}
is true and
\begin{equation} \label{Abschätzung_t-s}
\|\mathrm{e}^{-t\tilde{H}}u \|_{p(s), \mu} \ \leq \ \|\mathrm{e}^{-(t-s)\tilde{H}} \|_{p(s) \to p(s), \mu} \
\|\mathrm{e}^{-s\tilde{H}}u \|_{p(s), \mu}  \ \leq \  \|\mathrm{e}^{-s\tilde{H}}u \|_{p(s), \mu}
\end{equation}
holds for every $s \in (0,t)$ since $\mathrm{e}^{-(t-s)\tilde{H}}$ is a contraction in $\mathrm{L}_{\mu}^{p(s)}(\R^{n}, \R)$
by Theorem \ref{Contraction_weightedSemigroup}. We focus on proving that
$$
\left\{ s \mapsto \|\mathrm{e}^{-s\tilde{H}}u \|_{p(s), \mu} \right\}
$$
is monotonically decreasing on $[0,t)$ where
$\|\mathrm{e}^{-0\tilde{H}}u \|_{p(0), \mu} =  \| u \|_{2, \mu} = \| \varphi u \|_{2}$ is true.
Following Lemma 2.2.2 on page 64 in \cite{Davies07} we imply
\begin{align*}
& \frac{\d}{\d s} \|\mathrm{e}^{-s\tilde{H}}u \|^{p(s)}_{p(s), \mu} \\
& = \ p(s) \langle -\tilde{H}\mathrm{e}^{-s\tilde{H}}u, \left( \mathrm{e}^{-s\tilde{H}}u \right)^{p(s)-1}  \rangle_{\mu} 
+ p^{\prime}(s) \int_{\R^{n}} \left( \mathrm{e}^{-s\tilde{H}}u (x) \right)^{p(s)} \ln \left( \mathrm{e}^{-s\tilde{H}}u (x) \right) \d \mu(x)
\end{align*}
is true. Let us include the function $N$ to our argumentation to use the Logarithmic Sobolev inequalities of $\tilde{H}$ mentioned 
in Section \ref{Rosen_lemma}. 
Mind that the derivative of
$$
\ln \left( \ \mathrm{e}^{-N(s)}  \| \mathrm{e}^{-s\tilde{H}} u \|_{p(s),\mu} \ \right) 
= - N(s) + \frac{1}{p(s)} \ln \| \mathrm{e}^{-s\tilde{H}} u \|_{p(s),\mu}^{p(s)}
$$
is equal to
\begin{equation*}
 - \underbrace{\frac{2\beta\left( \varepsilon_{t}\left( p(s) \right) \right)}{\varepsilon_{t}\left( p(s) \right) p(s)}}_{=N^{\prime}(s)} 
\underbrace{- \frac{1}{p(s)  \varepsilon_{t}\left( p(s) \right)}}_{=\frac{\d}{\d s} \frac{1}{p(s)}} \  
\ln \left\| \mathrm{e}^{-s\tilde{H}} u \right\|_{p(s),\mu}^{p(s)}
 + \frac{1}{p(s)} \ \left\| \mathrm{e}^{-s\tilde{H}} u \right\|_{p(s),\mu}^{-p(s)} \ 
\frac{\d}{\d s} \left\| \mathrm{e}^{-s\tilde{H}} u \right\|_{p(s),\mu}^{p(s)}.
\end{equation*}
Hence, we end up with
\begin{align*}
& \frac{\d}{\d s} \ \ln \left( \ \mathrm{e}^{-N(s)}  \| \mathrm{e}^{-s\tilde{H}} u \|_{p(s),\mu} \ \right)  = \\
\ \\
& \frac{1}{ \varepsilon_{t}\left( p(s) \right) } \ \left\| \mathrm{e}^{-s\tilde{H}} u \right\|_{p(s),\mu}^{-p(s)}
 \left\{ \ \int_{\R^{n}} (\mathrm{e}^{-s\tilde{H}} u (x) )^{p(s)} \ \ln \left( \mathrm{e}^{-s\tilde{H}} u(x) \right) \d \mu(x) \right. \\
& - \varepsilon_{t}\left( p(s) \right) \  
\left\langle \tilde{H} \mathrm{e}^{-s\tilde{H}} u, (\mathrm{e}^{-s\tilde{H}} u)^{p(s)-1} \right\rangle_{\mu}
 - \frac{2\beta\left( \varepsilon_{t}\left( p(s) \right) \right)}{p(s)} \ \left\| \mathrm{e}^{-s\tilde{H}} u \right\|_{p(s),\mu}^{p(s)} \\
& \left.
 - \left\| \mathrm{e}^{-s\tilde{H}} u \right\|_{p(s),\mu}^{p(s)} \ 
\ln \left\| \mathrm{e}^{-s\tilde{H}} u \right\|_{p(s),\mu} \right\} \leq 0
\end{align*}
for every $s \in [0,t)$ due to the Logarithmic Sobolev inequalities of $\tilde{H}$ in Rosen's lemma \ref{Rosen_lemma}. 
Therefore,
\begin{equation}\label{Abschätzung_p(s)-2}
 \mathrm{e}^{-N(s)}   \left\| \mathrm{e}^{-s\tilde{H}} u \right\|_{p(s),\mu}
 \ \leq \  \mathrm{e}^{-N(0)}  \| u \|_{p(0),\mu} = \| u \|_{2,\mu}
\end{equation}
is implied for $s \in [0,t)$. So, for every $t \in (0,T]$ we infer
\begin{equation}\label{finale_Abschätzung}
\| \mathrm{e}^{-t\tilde{H}}u \|_{\infty,\mu} \ 
= \ \lim_{s \to t}  \| \mathrm{e}^{-t\tilde{H}}u \|_{p(s),\mu}
 \leq \ 
\lim_{s \to t} \mathrm{e}^{N(s)} \ \| u \|_{2,\mu} =  \mathrm{e}^{M(t)} \ \| u \|_{2,\mu}
\end{equation}
is true due to (\ref{Abschätzung_t-s}) and (\ref{Abschätzung_p(s)-2}).
For $t > T$ there exists a $k \in \N$ such that $kT < t \leq (k+1)T$ which implies $0 < t -kT \leq T$. 
Therefore we conclude to 
$$
\left|  \mathrm{e}^{-t\tilde{H}}u (x) \right| = \left|  \mathrm{e}^{-(t-kT)\tilde{H}} \left( \mathrm{e}^{-kT\tilde{H}}u \right) (x) \right|
\leq  \mathrm{e}^{M(t-kT)} \ 
\|  \mathrm{e}^{-kT\tilde{H}} u \|_{2,\mu}
\leq \mathrm{e}^{M(t-kT)} \ \| u \|_{2,\mu}
$$
almost everywhere in $\R^{n}$ by (\ref{finale_Abschätzung}). So, for every $t > 0$ there exists
a constant $C_{t} > 0$ with
$
\| \mathrm{e}^{-t\tilde{H}}u \|_{\infty,\mu}  \leq \ C_{t} \  \| u \|_{2,\mu}
$
which implies
\begin{equation}\label{IU_Abschätzung}
\left| \left( \mathrm{e}^{-tH} \varphi u \right) (x) \right| \ \leq \ C_{t} \ \varphi (x) \ \| \varphi u\|_{2}  
\end{equation}
almost everywhere in $\R^{n}$ for $u \in D(\tilde{H}) \cap \mathrm{L}^{\infty}_{\mu}(\mathbb{R}^{n})$ 
being non-negative almost everywhere in $\mathbb{R}^{n}$.

\subsection{The final part}

Using $D(\tilde{H}) = \varphi^{-1} D(H)$ we rewrite (\ref{IU_Abschätzung}) to 
\begin{equation}\label{IU_inequality}
\left| \left( \mathrm{e}^{-tH} v \right) (x) \right| \ \leq \ C_{t} \varphi(x) \| v \|_{2}
\end{equation}
for every $v \in D(H)$ being non-negative such that $\frac{1}{\varphi} v \in \mathrm{L}^{\infty}(\mathbb{R}^{n})$. 
By Lemma \ref{ground_state_continuous} and Lemma \ref{strict_positivity_ground_state} 
we state that $\varphi^{-1}$ is bounded on every compact
set $M$ in $\R^{n}$. So, for every non-negative function $v$ in $C_{c}^{\infty}(\R^{n}) \subseteq D(H)$ 
we conclude that  $\frac{1}{\varphi} v$ is bounded. Hence (\ref{IU_inequality})
holds for every non-negative $v \in C_{c}^{\infty}(\R^{n})$ and therefore holds for every non-negative function
in $\mathrm{L}^{2}(\R^{n})$ by density argumentation.\\

Now, let $u \in \mathrm{L}^{2}(\R^{n})$ be real-valued but not necessarily non-negative almost everywhere in $\R^{n}$. 
We use the positive and negative parts $u^{+}$ and $u^{-}$ of $u$ which are contained  in $\mathrm{L}^{2}(\R^{n})$
and non-negative almost everywhere in $\R^{n}$. By Lemma \ref{positiv_H} the operator $\mathrm{e}^{-tH}$ is
not only linear but also positive. So, we conclude that
$\left( \mathrm{e}^{-tH}u \right)^{+} = \mathrm{e}^{-tH}u^{+}$
is true which means that the positive part of $ \mathrm{e}^{-tH}u$ is given by $\mathrm{e}^{-tH}u^{+}$. 
A similar argument can be made for the negative part of $ \mathrm{e}^{-tH}u$. Therefore (\ref{IU_inequality}) holds 
also in the case of any real-valued function $u \in \mathrm{L}^{2}(\R^{n})$.\\

Finally, let $u \in \mathrm{L}^{2}(\R^{n})$ but not necessarily real-valued. Then, the argumentation is similar  
since the real and imaginary parts of $u$ are real-valued functions in $\mathrm{L}^{2}(\R^{n})$ and 
$\mathrm{e}^{-tH}$ is linear and real again by Lemma \ref{positiv_H}. We conclude that 
(\ref{IU_inequality}) holds for a general function $u \in \mathrm{L}^{2}(\R^{n})$ which proves Theorem \ref{IU_of_e^{-tH}}.

\newpage

\appendix

\section{Agmon's version of the comparison principle}

In contrast to \cite{AlziaryTakac09} we use Agmon's version of the comparison principle in \cite{Hundertmark_Jex_Lange_2023}. 
Let $h$ be the quadratic form of Section \ref{definitions_and_preparations} 
with the associated Schrödinger operator $H$ on $\mathrm{L}^{2} \left( \R^{n} \right)$.\\

\begin{dfn} \label{Definition_sub_supersolution}
Let $U \subseteq \R^{n}$ be an open set.
\begin{enumerate}[a)]
\item The local quadratic form domain $D_{loc}^{U}(h)$ is given by
\begin{equation}
D_{loc}^{U}(h) = \left\{ u \in \mathrm{L}^{2}_{loc} \left( U \right) \ : \ \chi u \in D(h) \text{ for every } 
\chi \in C_{c}^{\infty}\left( U \right) \right\}.
\end{equation}
It is the vector space of functions that are locally in $D(h)$.
\item $u$ is a supersolution of $H$ in $U$ with energy $E$, if $u \in D_{loc}^{U}(h)$ and
\begin{equation}
h\left( u, \xi \right) - E \langle u, \xi  \rangle \ \geq \ 0 
\end{equation}
are satisfied for every non-negative $\xi \in C_{c}^{\infty}\left( U \right)$. Please note that we have slightly abused 
notation here since $u$ might not be contained in $D(h)$ but we understand  
$h\left( u, \xi \right)$ as 
$\langle \nabla u, \nabla \xi \rangle + \langle q(x)u, \xi \rangle$.
\item $u$ is a subsolution of $H$ in $U$ with energy $E$, if $u \in D_{loc}^{U}(h)$ and
\begin{equation}
h\left( u, \xi \right) - E \langle u, \xi  \rangle \ \leq \ 0 
\end{equation}
are satisfied for every non-negative $\xi \in C_{c}^{\infty}\left( U \right)$.\\
\end{enumerate}
\end{dfn}

Theorem \ref{comparision_principle} is taken from \cite{Hundertmark_Jex_Lange_2023} in form of
Theorem 2.7 on page 11. \\

\begin{thm}[Agmon's version of the comparison principle] \label{comparision_principle} \ \\
Let $R > 0$ and $u$ be a positive supersolution of $H$ with energy $E$ in a neighborhood of infinity 
$\Omega_{R} = \left\{ x \in \R^{n} \ : \ |x| > R \right\}$. Furthermore, let $v$ be a subsolution of $H$ in 
$\Omega_{R}$ with energy $E$ such that
\begin{equation}\label{L^2_property_Agmon}
\liminf_{N \to \infty} \left( \frac{1}{N^{2}} \int_{N \leq |x| \leq \alpha N} \left| v \right|^{2} \d x \right) = 0
\end{equation}
holds for some $\alpha > 1$. If for some $\delta > 0$ and $0 \leq C < \infty$ one has
\begin{equation}\label{boundary_property_Agmon}
v(x) \ \leq \ Cu(x) 
\end{equation} 
on the annulus $R < |x| \leq R + \delta$, then $v(x) \ \leq \ Cu(x)$ holds for almost every $x \in \Omega_{R}$.\\
\end{thm}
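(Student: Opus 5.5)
Agmon's comparison principle (Theorem \ref{comparision_principle}) is taken from \cite{Hundertmark_Jex_Lange_2023}, and I would follow the standard ground-state-substitution argument for it. Set $w = (v - Cu)_{+}$; the goal is to show $w = 0$ almost everywhere in $\Omega_{R}$. The boundary hypothesis (\ref{boundary_property_Agmon}) gives $w = 0$ on the annulus $R < |x| \leq R+\delta$. I would therefore take cutoffs $\chi_{N} \in C_{c}^{\infty}(\R^{n})$ with $\chi_{N} = 1$ on $R+\delta/2 \leq |x| \leq N$ and $\chi_{N} = 0$ for $|x| \geq \alpha N$, with support away from $|x| \le R+\delta/4$ and with $|\nabla \chi_{N}| \lesssim 1/N$ on $N \leq |x| \leq \alpha N$. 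Then $\chi_{N}^{2} w$ (more precisely $\chi_{N}^{2} w^{2}/u$) is an admissible compactly supported test function in $\Omega_{R}$, after an approximation argument in $D_{loc}^{\Omega_{R}}(h)$.

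The key step is a ground state representation. Write $v_{C} = v - Cu$; it is a subsolution at energy $E$, since $-Cu$ is a subsolution. On the set where $w > 0$, write $w = u\,\eta$ with $\eta = w/u \geq 0$. Test the subsolution inequality for $v_{C}$ with $\chi_{N}^{2} u \eta$, which is non-negative and supported where $v_{C} > 0$. Test the supersolution inequality for $u$ with $\chi_{N}^{2}\eta^{2} u \cdot (\ldots)$, arranged as in the usual Allegretto--Piepenbrink identity. Combining the two should give
\begin{equation*}
\int u^{2}\chi_{N}^{2}|\nabla \eta|^{2} \d x \ \leq \ \int w^{2} |\nabla \chi_{N}|^{2} \d x ,
\end{equation*}
that is, the Agmon-type bound $h(\chi_N w,\chi_N w) - E\|\chi_N w\|^2 \le \|w\nabla\chi_N\|^2$ for a subsolution, together with positivity of the form on $\chi_N w$ coming from the positive supersolution $u$. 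The $q$-terms cancel because both sides involve the same potential. Whether $u$ is positive only almost everywhere is irrelevant locally, since $u>0$ is given.

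Next, bound the right-hand side by $\frac{c}{N^{2}}\int_{N\le|x|\le\alpha N} |v|^{2}$, using $0 \le w \le v_{+}$. Along a subsequence, (\ref{L^2_property_Agmon}) sends this to $0$. By monotone convergence, $\nabla(w/u) = 0$ almost everywhere in $\{|x| > R+\delta/2\}$. Since $\Omega_{R}$ minus the annulus is connected for $n \geq 2$, $w/u$ is constant there. It vanishes near the annulus, hence $w = 0$, which gives $v \leq Cu$.

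The main obstacle I expect is justifying the test functions $\chi_N^2 w^2/u$ and $\chi_N^2 w$ when $u$ and $v$ lie only in $D_{loc}$ and $1/u$ may be unbounded. I would handle this by replacing $u$ with $u+\epsilon$ (still a supersolution, since $q \ge 0$ up to an energy shift handled locally), or by truncating. Then the chain rule for $(\cdot)_{+}$ in $H^{1}_{loc}$ applies, and one lets $\epsilon \to 0$ at the end.
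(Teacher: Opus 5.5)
The paper does not prove this theorem; it quotes it as Theorem 2.7 of \cite{Hundertmark_Jex_Lange_2023}, which goes back to \cite{Agmon85}. Your sketch is the standard argument behind those references, so the route is right: pass to $w=(v-Cu)_+$, localize with $\chi_N$, compare the subsolution bound for $w$ with the ground state representation relative to $u$, and remove the error term with the $\liminf$ hypothesis. Two points need correcting before it is a proof.

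\textbf{The key inequality.} Your two tests are the subsolution inequality for $v-Cu$ against $\chi_N^2 w$ and the supersolution inequality for $u$ against $\chi_N^2 w^2/u$. After the $(q-E)\chi_N^2 w^2$ terms cancel, they combine to exactly
\begin{equation*}
\int u^{2}\,\bigl|\nabla(\chi_N w/u)\bigr|^{2} \d x \ \leq \ \int w^{2}\,|\nabla\chi_N|^{2} \d x .
\end{equation*}
Your display, with $\chi_N^2|\nabla\eta|^2$ on the left, follows from this only with a factor $4$, after Cauchy--Schwarz on the cross term $2\int u^2\chi_N\eta\,\nabla\chi_N\cdot\nabla\eta$. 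Either version suffices, but the one above is more convenient. For a fixed compact $K\subset\{|x|>R+\delta/2\}$ one has $\chi_N=1$ on $K$ for large $N$, so $\int_K u^2|\nabla\eta|^2\le\|w\nabla\chi_N\|^2\to 0$ along your subsequence. Mere positivity of $h-E$ on $\chi_N w$ would not be enough; what you actually use is the remainder term $\int u^2|\nabla(\phi/u)|^2$ of the ground state representation.

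\textbf{The regularization.} As stated it is wrong. Since
$h(u+\epsilon,\xi)-E\langle u+\epsilon,\xi\rangle = h(u,\xi)-E\langle u,\xi\rangle+\epsilon\int(q-E)\xi$,
the function $u+\epsilon$ is a supersolution only where $q\ge E$, and this is not assumed. If you want to keep an $\epsilon$, put it only in the denominator of the test function. The resulting defect $-\epsilon\int(q-E)\chi_N^2w^2/(u+\epsilon)$ tends to $0$ by dominated convergence, because $u>0$ a.e.

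Shifting $u$ also misses the second obstruction: $\chi_N^2w^2/u$ lies in $H^1$ only if $w$ is locally bounded, since $w\nabla w$ is a priori only in $\mathrm{L}^1_{loc}$. In the present setting both issues are removed by the regularity theory the paper already uses, because $q$ is continuous:
\begin{enumerate}[i.)]
\item Local boundedness of subsolutions (Theorem 8.17 in \cite{GilbargTrudinger01}) gives $0\le w\le v_+\in\mathrm{L}^\infty_{loc}(\Omega_R)$.
\item The weak Harnack inequality (Theorem 8.18 there), applied to the positive supersolution $u$, gives $\mathrm{ess\,inf}_K\, u>0$ on every compact $K\subset\Omega_R$.
\end{enumerate}
Then $\chi_N^2w^2/u$ is a legitimate non-negative test function after mollification, and $\eta=w/u\in H^1_{loc}(\Omega_R)$. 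You can divide by $u$ directly with no $\epsilon$, and your connectedness argument on $\{|x|>R+\delta/2\}$ finishes the proof.
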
  

\begin{rem} \label{Remark_comparision_principle} \ 
\begin{enumerate}[i.)]
\item In its original form  \cite{Agmon85}
Agmon assumes that the supersolution $u$ as well as the subsolution $v$ are both continuous in $\overline{\Omega_{R}}$. 
However, this additional assumption of $u$ and $v$ being continuous are only made to guarantee that
condition (\ref{boundary_property_Agmon}) holds for a constant $C = c_{2}/c_{1}$ with 
$c_{1} = \inf_{R \leq |x| \leq R+ \delta} u(x)$ and $c_{2} = \sup_{R \leq |x| \leq R+ \delta} \left| v(x) \right|$ and arbitrary $\delta > 0$.
Please see Remark 2.9 on page 12 in \cite{Hundertmark_Jex_Lange_2023} for further details.
\item Furthermore, for $v \in \mathrm{L}^{2}\left( \Omega_{R} \right)$ condition (\ref{L^2_property_Agmon}) is already implied. \\
\end{enumerate}
\end{rem}

\section{Positivity of Schrödinger semigroups}\label{e^{-tH}_positivity_improving}

To use Lemma \ref{CharacterizationIrreducibilty} we first have to show that $h$ satisfies the 
demanded property.\\

\begin{prp}
Let $u, v \in D(h)$ have disjoint supports. Then $h(u,v) = 0$ follows.
\end{prp}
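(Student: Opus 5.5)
We will treat the two parts of $h(u,v)$ separately. Because the integrands are products of functions with disjoint supports, each part should vanish. The only care needed is in what "support" means for an $H^{1}$ function. We take the support of a measurable function to be its essential support: the complement of the largest open set on which the function vanishes almost everywhere. So let $S_u=\operatorname{supp}u$ and $S_v=\operatorname{supp}v$ be closed sets with $S_u\cap S_v=\emptyset$. Then $u=0$ almost everywhere on the open set $\R^{n}\setminus S_u$, and likewise $v=0$ almost everywhere on $\R^{n}\setminus S_v$.

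\textbf{The potential term.} We have
$$
\langle q^{\frac12}u,q^{\frac12}v\rangle=\int_{\R^{n}} q(x)\,u(x)\overline{v(x)}\d x .
$$
The integrand vanishes almost everywhere. Indeed, every $x$ lies outside $S_u$ or outside $S_v$, so $\R^{n}$ is covered by the two open sets $\R^{n}\setminus S_u$ and $\R^{n}\setminus S_v$. On the first $u=0$ a.e., and on the second $v=0$ a.e. The integral is absolutely convergent by Cauchy--Schwarz, since $q^{\frac12}u,\,q^{\frac12}v\in\mathrm{L}^{2}(\R^{n})$. Hence this term is $0$.

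\textbf{The gradient term.} Here we need that weak derivatives are local. If $u\in H^{1}(\R^{n})$ vanishes a.e. on an open set $U$, then $\partial_j u=0$ a.e. on $U$. This holds because for every $\phi\in C_c^\infty(U)$ we have $\int_U \partial_j u\,\phi=-\int_U u\,\partial_j\phi=0$, and the fundamental lemma of the calculus of variations then gives $\partial_j u=0$ a.e. on $U$. Applying this with $U=\R^{n}\setminus S_u$ and $U=\R^{n}\setminus S_v$, we get $\partial_ju\,\overline{\partial_jv}=0$ a.e. on each of the two open sets, which cover $\R^{n}$. Therefore $\langle\partial_ju,\partial_jv\rangle=0$ for every $j$.

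Adding the two parts gives $h(u,v)=0$. The main point to get right is the interpretation of disjoint supports. If one only assumed that the sets $\{u\neq0\}$ and $\{v\neq0\}$ are disjoint up to null sets, the potential term would still vanish trivially. The gradient term would then need the stronger fact that $\nabla u=0$ a.e. on $\{u=0\}$ for every $u\in H^{1}(\R^{n})$, a Stampacchia-type result that can be cited from Gilbarg--Trudinger (Lemma 7.7). With closed essential supports, the elementary locality argument above is enough.
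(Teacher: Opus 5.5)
Your proof is correct and follows essentially the same route as the paper: closed supports, locality of weak derivatives via the fundamental lemma of the calculus of variations (which the paper records as $\partial_j u = 1_{S_1}\partial_j u$ and $\partial_j v = 1_{S_2}\partial_j v$), and hence vanishing of the products. You also make the vanishing of the potential term explicit, which the paper leaves implicit, and your covering of $\R^{n}$ by the two open complements handles the cases $\mathrm{supp}(u)\in\{\emptyset,\R^{n}\}$ uniformly, so the paper's separate treatment of those cases is unnecessary.
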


\begin{proof}
Let $u, v \in D(h)$ with $\mathrm{supp} (u) \cap \mathrm{supp} (v) = \emptyset$.\\

First, let us deal with the special cases that the support of $u$ is either $\R^{n}$ or $\emptyset$.
If $\mathrm{supp} (u) = \R^{n}$ then $\mathrm{supp} (v)$ has to be empty. That means $v$ is $0$ almost everywhere
in $\R^{n}$ which implies $h(u,v) = 0$. A similar argument holds 
for the case of $\mathrm{supp} (u) = \emptyset$. Then $u$ is the function being $0$ almost everywhere in $\R^{n}$.
Once again $h(u,v) = 0$ follows in this case.\\

Now, let neither $\mathrm{supp} (u)$ nor $\mathrm{supp} (v)$ be $\emptyset$ or $\R^{n}$. We define 
the closed sets $S_{1}$ as $\mathrm{supp} (u)$ and $S_{2}$ as $\mathrm{supp} (v)$. Remember that 
$S_{1} \cap S_{2} = \emptyset$. The $j$-th weak derivative of $u$ is equal to $0$ on $\R^{n} \setminus S_{1}$.
For $\psi \in C_{c}^{\infty}(\R^{n})$ such that $\mathrm{supp}(\psi) \subset \R^{n} \setminus S_{1}$ we get
$$
0 = - \int_{\R^{n}} u(x) \ \overline{\partial_{j} \psi (x)} \ \mathrm{d}x = \int_{\R^{n}} \partial_{j} u (x) \  \overline{\psi (x)} \ \mathrm{d}x = 
\int_{\R^{n} \setminus S_{1}} \partial_{j} u (x) \  \overline{\psi (x)} \ \mathrm{d}x.
$$
The fundamental lemma in calculus of variations states that $\partial_{j} u$ is $0$ on $\R^{n} \setminus S_{1}$.
A similar statement is true on $\R^{n} \setminus S_{2}$ for the $j$-th weak derivative of $v$. Hence 
$\partial_{j} u = 1_{S_{1}} \partial_{j} u$ and  $\partial_{j} v = 1_{S_{2}} \partial_{j} v$ holds. We conclude 
that
$$
h(u,v) = \bigl\{ \sum_{j=1}^{n} \langle \partial_{j} u,  \partial_{j} v \rangle   \bigr\} 
+ \langle q^{\frac{1}{2}} u, q^{\frac{1}{2}} v \rangle
 =  \sum_{j=1}^{n} \langle 1_{S_{1}} \partial_{j} u, 1_{S_{2}} \partial_{j} v \rangle = 0
$$
which proves the claim.\\
\end{proof}

We present the following characterization from \cite{Ouhabaz05} as Corollary 2.11 on page 54.\\

\begin{lem}\label{CharacterizationIrreducibilty}
If $h$ satisfies $h(u, v) = 0$ for functions $u, v \in D(h)$ having disjoint 
supports then the following statements are equivalent:
 \begin{enumerate}[i.)]
\item For every $t>0$ the operators $\mathrm{e}^{-tH}$ are positivity improving.
\item If $\Omega \subseteq \R^{n}$ satisfies $1_{\Omega} \ u \in D(h)$ for every 
$u \in D(h)$ then either $\Omega$ or $\R^{n} \setminus \Omega$ is a Lebesgue null set.\\
\end{enumerate} 
\end{lem}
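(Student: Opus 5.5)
The approach is to reduce both implications to the invariance criterion for closed convex sets under semigroups associated with forms, Theorem 2.2 in \cite{Ouhabaz05}. For the closed ideal $I_{\Omega}=\{u\in\mathrm{L}^{2}(\R^{n}) : u=0 \text{ a.e. on } \R^{n}\setminus\Omega\}$, with orthogonal projection $P_{\Omega}u=1_{\Omega}u$, the criterion reads: $I_{\Omega}$ is invariant under $\mathrm{e}^{-tH}$ for all $t>0$ if and only if $P_{\Omega}D(h)\subseteq D(h)$ and $\mathrm{Re}\, h(P_{\Omega}u,u-P_{\Omega}u)\geq 0$ for all $u\in D(h)$. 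We already know from Lemma \ref{positiv_H} that $\mathrm{e}^{-tH}$ is real and positive.

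The first step uses the locality hypothesis on $h$. If $1_{\Omega}u\in D(h)$ for every $u\in D(h)$, then $1_{\Omega}u$ and $1_{\R^{n}\setminus\Omega}u$ lie in $D(h)$ and have disjoint supports. Here supports are understood in the measure-theoretic sense, and I will check that the Proposition above is used in that sense. The hypothesis then gives $h(P_{\Omega}u,u-P_{\Omega}u)=0$. So, given locality, the algebraic condition in ii.) is exactly equivalent to invariance of $I_{\Omega}$.

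For i.) $\Rightarrow$ ii.): let $\Omega$ satisfy the condition in ii.), so that $I_{\Omega}$ is invariant. Suppose both $\Omega$ and $\R^{n}\setminus\Omega$ have positive measure. Take $u=1_{\Omega\cap B}\geq 0$ with $B$ a ball chosen so that $u\neq 0$. Then $\mathrm{e}^{-tH}u\in I_{\Omega}$ vanishes on a set of positive measure, which contradicts positivity improving.

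For ii.) $\Rightarrow$ i.): by the converse direction of the criterion, condition ii.) says that the positive semigroup admits no nontrivial invariant closed ideal, i.e. it is irreducible. I then upgrade irreducibility to positivity improving, using self-adjointness. Fix $0\leq u\neq 0$ and $0\leq w\neq 0$ in $\mathrm{L}^{2}(\R^{n})$.
\begin{enumerate}[1.)]
\item Let $J$ be the closed ideal generated by the orbit $\{\mathrm{e}^{-sH}u : s>0\}$. Every closed ideal of $\mathrm{L}^{2}$ has the form $I_{\Omega'}$, where $\Omega'$ is the essential union of the sets $\{\mathrm{e}^{-sH}u>0\}$.
\item $J$ is invariant: if $|f|\leq c\,\mathrm{e}^{-sH}u$, positivity gives $|\mathrm{e}^{-tH}f|\leq c\,\mathrm{e}^{-(t+s)H}u$, and invariance passes to closures.
\item Since $\mathrm{e}^{-sH}u\to u\neq 0$ as $s\to 0$, $J$ is nonzero. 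Irreducibility then forces $J=\mathrm{L}^{2}(\R^{n})$, i.e. $\Omega'$ has full measure.
\item Consequently $\langle \mathrm{e}^{-sH}u,w\rangle>0$ for some $s>0$.
\item The function $z\mapsto\langle \mathrm{e}^{-zH}u,w\rangle$ is holomorphic on $\mathrm{Re}\,z>0$, because $H\geq 0$ is self-adjoint. It is nonnegative on the positive axis.
\item Write $\langle \mathrm{e}^{-tH}u,w\rangle=\langle \mathrm{e}^{-tH/2}u,\mathrm{e}^{-tH/2}w\rangle$, and suppose it vanished for one $t>0$. Since both factors are nonnegative, $\mathrm{e}^{-tH/2}u$ and $\mathrm{e}^{-tH/2}w$ have disjoint supports. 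Positivity and the semigroup law then propagate this vanishing to all smaller times, and analyticity propagates it to all times. This contradicts step 4.)
\end{enumerate}
Hence $\langle\mathrm{e}^{-tH}u,w\rangle>0$ for every $t>0$ and every such $w$, which means $\mathrm{e}^{-tH}u>0$ almost everywhere.

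The main obstacle is step 6.), passing from ``positive for some time'' to ``positive for all times''. My first attempt is the direct route: use the identity theorem on $(0,\infty)$ together with the monotonicity that $\langle\mathrm{e}^{-tH}u,w\rangle=0$ implies $\langle\mathrm{e}^{-sH}u,w\rangle=0$ for $s\leq t$. If that argument gets stuck, I would instead quote Ouhabaz's Theorem 2.9 on holomorphic positive irreducible semigroups. A second, more routine point is the correct measure-theoretic reading of ``disjoint supports'' so that the Proposition applies to $1_{\Omega}u$ and $1_{\R^{n}\setminus\Omega}u$.
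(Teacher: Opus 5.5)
Your reduction is sound and is essentially the argument behind the paper's treatment. The paper gives no proof of this lemma and only cites Ouhabaz (Corollary 2.11); your combination of the ideal-invariance criterion with an upgrade from irreducibility to positivity improving is what lies behind that citation. The implication i.) $\Rightarrow$ ii.), the identification of ii.) with irreducibility, and steps 1.)--5.) are correct.

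The genuine gap is step 6.). The claim that $\langle \mathrm{e}^{-tH}u,w\rangle=0$ forces $\langle \mathrm{e}^{-sH}u,w\rangle=0$ for all $s\le t$ does not follow from positivity and the semigroup law. For the positive translation semigroup $T(t)f(x)=f(x+t)$ on $\mathrm{L}^{2}(\R)$ one has $\langle T(t)1_{[0,1]},1_{[0,1]}\rangle=\max\{0,1-t\}$, which vanishes at $t=2$ but not at $t=\frac{1}{2}$. Knowing that $\mathrm{e}^{-tH/2}u$ and $\mathrm{e}^{-tH/2}w$ have disjoint supports does not help either. You would need these supports to only shrink as $t$ decreases, and that is exactly the unproved point.

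The missing ingredient is a support lemma that genuinely uses self-adjointness: for $0\le g\in\mathrm{L}^{2}(\R^{n})$ and $r>0$ one has $\mathrm{e}^{-rH}g>0$ a.e.\ on $\{g>0\}$. Otherwise there are $c>0$ and a set $A\subseteq\{g>0\}\cap\{\mathrm{e}^{-rH}g=0\}$ with $0<\lambda(A)<\infty$ and $g\ge c1_{A}$, and positivity gives
\[
0=\langle \mathrm{e}^{-rH}g,1_{A}\rangle \ \ge\ c\,\langle \mathrm{e}^{-rH}1_{A},1_{A}\rangle = c\,\|\mathrm{e}^{-rH/2}1_{A}\|_{2}^{2}>0,
\]
where the last step uses injectivity of $\mathrm{e}^{-rH/2}$. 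Take $g=\mathrm{e}^{-sH}u$ and $r=t-s$. This shows $\{\mathrm{e}^{-sH}u>0\}\subseteq\{\mathrm{e}^{-tH}u>0\}$ up to null sets for $s<t$. So if $w$ vanishes on the support of $\mathrm{e}^{-tH}u$, then $\langle\mathrm{e}^{-sH}u,w\rangle=0$ on $(0,t]$. Analyticity then gives vanishing on $(0,\infty)$, contradicting step 4.). With this lemma inserted the proof is complete; your fallback citation would also close the gap and is no weaker than what the paper does.

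On locality: your measure-theoretic reading is the one needed. The paper's Proposition works with closed supports and does not cover $1_{\Omega}u$ and $1_{\R^{n}\setminus\Omega}u$; that case requires $\nabla v=0$ a.e.\ on $\{v=0\}$ for $v\in H^{1}(\R^{n})$. Note, however, that locality enters only in i.) $\Rightarrow$ ii.). The direction ii.) $\Rightarrow$ i.), which is the only one the paper actually uses, needs just the converse half of the invariance criterion, as your own argument shows.
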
 

We use this Lemma \ref{CharacterizationIrreducibilty} to prove that $\mathrm{e}^{-tH}$ is positivity improving for every $t > 0$.\\

\begin{thm}
For every $t>0$ the operators $\mathrm{e}^{-tH}$ is positivity improving.\\
\end{thm}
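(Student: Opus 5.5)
We apply Lemma \ref{CharacterizationIrreducibilty}. The preceding proposition already shows that $h(u,v)=0$ whenever $u,v \in D(h)$ have disjoint supports, so the lemma applies. It therefore suffices to verify condition ii.). Let $\Omega \subseteq \R^{n}$ be measurable with $1_{\Omega} u \in D(h)$ for every $u \in D(h)$. We must show that $\Omega$ or $\R^{n}\setminus\Omega$ is a Lebesgue null set.

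The plan is to localize with cutoffs and use the fact that an $H^{1}$ function taking only the values $0$ and $1$ on a connected set is constant. Fix a ball $B_{R}(0)$ and choose $\chi \in C_{c}^{\infty}(\R^{n})$ with $0 \le \chi \le 1$ and $\chi \equiv 1$ on $B_{R}(0)$. Since $q$ is continuous, it is bounded on $\mathrm{supp}(\chi)$, so $\chi \in D(h)$. By hypothesis $w := 1_{\Omega}\chi \in D(h) \subseteq H^{1}(\R^{n})$. On $B_{R}(0)$ we have $w = 1_{\Omega}$, so $1_{\Omega}$ restricted to $B_{R}(0)$ lies in $H^{1}(B_{R}(0))$.

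The key step is to show that $\nabla w = 0$ almost everywhere on $B_{R}(0)$. There we have $w^{2} = w$, and the chain rule for Sobolev functions gives $\nabla(w^{2}) = 2w\nabla w$. Hence $\nabla w = 2w\nabla w$ almost everywhere on $B_{R}(0)$. Where $w=0$ this gives $\nabla w=0$ directly, and where $w=1$ it gives $\nabla w = 2\nabla w$, so again $\nabla w = 0$. Alternatively, one can use the standard fact that $\nabla w = 0$ almost everywhere on any level set $\{w=c\}$. Since $B_{R}(0)$ is connected, a function in $H^{1}(B_{R}(0))$ with vanishing weak gradient is almost everywhere constant. One sees this by mollifying: the mollifiers have zero gradient, hence are locally constant, and they converge in $\mathrm{L}^{1}_{loc}$. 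So $1_{\Omega}$ equals either $0$ or $1$ almost everywhere on $B_{R}(0)$.

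It remains to make the choice consistent across radii. Letting $R \to \infty$ through the integers, the balls are nested, so the constant on $B_{R}(0)$ must agree with the constant on $B_{R'}(0)$ for $R'>R$. Therefore either $|\Omega \cap B_{R}(0)| = 0$ for all $R$, or $|B_{R}(0)\setminus\Omega| = 0$ for all $R$. Hence $\Omega$ or $\R^{n}\setminus\Omega$ is a null set, condition ii.) holds, and Lemma \ref{CharacterizationIrreducibilty} yields that $\mathrm{e}^{-tH}$ is positivity improving for every $t>0$.

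The main obstacle is justifying $\nabla 1_{\Omega}=0$ rigorously. I would handle this with the chain rule applied to the truncation $w^{2}$ (valid since $w$ is bounded), or with Stampacchia's theorem on level sets. Everything else is routine.
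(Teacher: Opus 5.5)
Your proof is correct, and it verifies condition ii.) of Lemma \ref{CharacterizationIrreducibilty} by a different route from the paper's.

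The paper argues by contradiction. In step i.) it uses the open sets $\Omega_{1},\Omega_{2}$, connectedness of $\R^{n}$ and a compactness argument to find a point $x_{0}$ such that every ball around it meets both $\Omega$ and $\Omega^{C}$ in positive measure. In step ii.) it takes a bump $u$ with $u(x_{0})=1$ and asserts $\partial_{j}(1_{\Omega}u)=1_{\Omega}\partial_{j}u$, so that $1_{\Omega}u\in W^{1,p}(\R^{n})$ for all $p\geq 2$. Morrey's embedding ($p>n$) then yields a continuous representative, which cannot jump at $x_{0}$.

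You instead prove ii.) directly. The identity $w^{2}=w$ on $B_{R}(0)$ together with the chain rule gives $\nabla 1_{\Omega}=0$ a.e.\ on each ball. Zero gradient on a connected ball gives a.e.\ constancy, and nesting makes the constant global.

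Your version needs only $H^{1}$: no Sobolev embedding, no search for an essential boundary point, and no pointwise evaluation of representatives. It is also the more rigorous of the two. The paper justifies $\partial_{j}(1_{\Omega}u)=1_{\Omega}\partial_{j}u$ with test functions in $C_{c}^{\infty}(\Omega)$ and $C_{c}^{\infty}(\Omega^{C})$, which only determines the gradient on the interiors of these merely measurable sets. It also evaluates the continuous representative at points $x_{k},y_{k}$ without ensuring they avoid the exceptional null set; the paper itself says ``without minding mathematical rigor''.

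The level-set fact you invoke repairs the first gap: the gradient of an $H^{1}$ function vanishes a.e.\ on each set $\{w=c\}$. Apply it to $1_{\Omega}u$ on $\Omega^{C}$ and to $1_{\Omega}u-u$ on $\Omega$.

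What the paper's approach offers is a purely local picture: a $W^{1,p}$ function with $p>n$ cannot jump across an essential boundary point. But that picture needs the very gradient identity your argument proves directly, so your route is the shorter path to the same conclusion.
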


\begin{proof}
We prove by contradiction and assume that $\{ \mathrm{e}^{-tH} \ | \  t \geq 0 \}$ is not positivity improving. 
By Lemma \ref{CharacterizationIrreducibilty} there is a set $\Omega \subseteq \R^{n}$ such that  
$1_{\Omega} u$  is contained in $D(h)$ for every $u \in D(h)$ but neither $\Omega$ nor 
$\Omega^{C}  = \R^{n} \setminus \Omega$ is a Lebesgue null set. We denote the Lebesgue measure in 
$\R^{n}$ by $\lambda$.\\

\begin{enumerate}[i.)]
\item We show that there exists a $x_{0} \in \R^{n}$ such that neither $\Omega \cap B(x_{0},r)$ nor 
$\Omega^{C} \cap B(x_{0},r)$ is a Lebesgue null set for any radius $r>0$.\\

We assume that this claim would be false. Then for every $x \in \R^{n}$ there is a radius $r=r(x) > 0$ such that either
$\Omega \cap B(x,r)$ or $\Omega^{C} \cap B(x,r)$ is a Lebesgue null set. We define the sets $\Omega_{1}$ by
$$
\Omega_{1} = \{ x \in \R^{n} \ | \ \exists r > 0 \ : \ \lambda(\Omega \cap B(x,r)) = 0 \}
$$
and $\Omega_{2}$ by $\{ x \in \R^{n} \ | \ \exists r > 0 \ : \ \lambda(\Omega^{C} \cap B(x,r)) = 0 \}$. Then 
$\R^{n} = \Omega_{1} \cup \Omega_{2}$ is true. Furthermore, mind that both sets 
are disjoint. If there would exists a $x \in \Omega_{1} \cap \Omega_{2}$ then
$$
0  = \lambda(\Omega \cap B(x,r)) + \lambda(\Omega^{C} \cap B(x,r)) = \lambda \bigl( B(x,r) \bigr) > 0
$$
would follow for $r>0$ chosen appropriately small enough. So, $\Omega_{1} \cap \Omega_{2}$ must be empty. 
It is also clear that $\Omega_{1}$ and $\Omega_{2}$ are 
both open sets in $\R^{n}$. But then either $\R^{n} = \Omega_{1}$ or $\R^{n} = \Omega_{2}$ must be true since $\R^{n}$
is connected.\\

Let $\R^{n} = \Omega_{1}$ be true. We choose a compact set $K \subseteq \Omega$ such that $\lambda(K) > 0$. This is possible
since $\Omega$ is not a pure point set in $\R^{n}$. For every $x \in K$ there exists a $r = r(x) > 0$ such that 
$$
\lambda (B(x,r) \cap \Omega)=0
$$ 
since $K \subseteq \Omega \subseteq \R^{n}=\Omega_{1}$. By the compactness of $K$ we conclude that 
$$
K \subseteq B(x_{1},r_{1}) \cup \dots \cup B(x_{m},r_{m}) 
$$
holds for some $x_{1}, \dots, x_{m} \in K$ where $r_{j}$ are given by $\Omega_{1}$. But then
$$
0 < \lambda(K) = \lambda(K \cap \Omega) \leq \sum_{j=1}^{m} \lambda \bigl( B(x_{j},r_{j}) \cap \Omega \bigr) = 0
$$
is obviously a contradiction. Using the same argumentation for $\R^{n} = \Omega_{2}$ leads to a contradiction as well. 
Hence there must exist a $x_{0} \in \R^{n}$ such that neither $\Omega \cap B(x_{0},r)$ nor $\Omega^{C} \cap B(x_{0},r)$ 
is a Lebesgue null set for any radius $r>0$.
\item Let $u \in C_{c}^{\infty}(\R^{n}) \subseteq D(h)$ be a function such that $u(x_{0}) = 1$. 
So $1_{\Omega} u$ is contained in $D(h)$ as we mentioned in the very beginning of this proof. 
In particular $1_{\Omega} u$ is an element of $H^{1}(\R^{n})$ with 
$\partial_{j} (1_{\Omega} u) = 1_{\Omega} \partial_{j}u$. But this last equation needs further explanation. First remember
that $\Omega$ is not a Lebesgue null set. Therefore we choose $v \in C_{c}^{\infty}(\Omega)$. So
\begin{align*}
& \int_{\Omega} (\partial_{j} u)(x) \overline{v(x)} \ \mathrm{d}x = - \int_{\Omega} u(x) \overline{(\partial_{j} v)(x)} \ \mathrm{d}x
 =  - \int_{\R^{n}} (1_{\Omega} u)(x) \overline{(\partial_{j} v)(x)} \ \mathrm{d}x \\
& = \int_{\R^{n}} \partial_{j}(1_{\Omega} u)(x) \overline{v(x)} \ \mathrm{d}x
 = \int_{\Omega} \partial_{j}(1_{\Omega} u)(x) \overline{v(x)} \ \mathrm{d}x
\end{align*}
is true where the last equation is due to the support of $v$ in $\Omega$. Hence we see that $\partial_{j} u$ is equal to 
$\partial_{j}(1_{\Omega} u)$ in $\Omega$ again due to the fundamental lemma in the calculus of variations. Now we remember 
that $\Omega^{C}$ is also not a Lebesgue null set. We
use the similar arguments for $v \in C_{c}^{\infty}(\Omega^{C})$. So
\begin{align*}
0 & = - \int_{\R^{n}} (1_{\Omega} u)(x) \overline{(\partial_{j} v)(x)} \ \mathrm{d}x = 
\int_{\R^{n}} \partial_{j}(1_{\Omega} u)(x) \overline{v(x)} \ \mathrm{d}x \\
& = \int_{\Omega^{C}} \partial_{j}(1_{\Omega} u)(x) \overline{v(x)} \ \mathrm{d}x 
\end{align*}
holds due to the support of $v$ in $\Omega^{C}$. Therefore $\partial_{j}(1_{\Omega} u)$ is $0$ in $\Omega^{C}$. So, 
in total we have shown that $\partial_{j} (1_{\Omega} u) = 1_{\Omega} \partial_{j}u$ is actually true.\\
Since $u \in C_{c}^{\infty}(\R^{n})$ we infer that $1_{\Omega} u$ is contained in the Sobolev spaces 
$W^{1,p}(\R^{n})$ for every $p \geq 2$. Sobolev embedding theorems state that $1_{\Omega} u$ has a 
continuous representative on $\R^{n}$. We treat $1_{\Omega} u$ as a continuous function without minding mathematical rigor.\\

Remember that for $x_{0}$ we have shown that neither $\Omega \cap B(x_{0},\frac{1}{k})$ nor $\Omega^{C} \cap B(x_{0},\frac{1}{k})$
are Lebesgue null sets for every $k \in \N$. Hence we choose two sequences by
$x_{k} \in \Omega \cap B(x_{0},\frac{1}{k})$ and $y_{k} \in \Omega^{C} \cap B(x_{0},\frac{1}{k})$. Then 
$$ 
|(1_{\Omega} u)(x_{k}) -  (1_{\Omega} u)(y_{k})| = |u(x_{k})|
$$
converges to $|u(x_{0})|$ for $k \to \infty$. But since $u(x_{0})$ is equal to $1$ by the choice of $u$ this contradicts the 
continuity of $1_{\Omega} u$. Hence our assumption leads to a contradiction which proves the claim.
\end{enumerate}
\end{proof}

\section{Miscellaneous}\label{Miscellaneous}

\begin{lem}
$\int_{\frac{1}{2}\ln(2)}^{\infty} f(r)^{-1} \d r \in (0,\infty)$
\end{lem}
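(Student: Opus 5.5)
The plan is to split the integral at $r_{0}$ and treat the two pieces separately. Here $f$ is the piecewise function of Section \ref{IU_e^-tH}: it equals $f_{k,m-1}$ on $[r_{0},\infty)$ and the exponential continuation $f_{k,m-1}(r_{0})\mathrm{e}^{r/r_{0}-1}$ below $r_{0}$. If $\frac{1}{2}\ln(2) \geq r_{0}$, only the tail piece occurs. Otherwise we write
$$
\int_{\frac{1}{2}\ln(2)}^{\infty} f(r)^{-1} \d r = \int_{\frac{1}{2}\ln(2)}^{r_{0}} \frac{\mathrm{e}^{1-\frac{r}{r_{0}}}}{f_{k,m-1}(r_{0})} \d r + \int_{r_{0}}^{\infty} \frac{\d r}{f_{k,m-1}(r)} .
$$

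The first piece is an integral of a continuous, strictly positive function over a compact interval. It is therefore finite and positive, provided $f_{k,m-1}(r_{0})>0$. This positivity follows from hypothesis ii.) of Theorem \ref{Main_Thm_2}: for $r_{0}=\ln\left(\int_{0}^{R_{m}}Q^{1/2}\right)$ we have $\ln^{(p)}(r_{0})>0$ for $p\le m-1$, so every factor of $f_{k,m-1}(r_{0})$ is positive. The same argument shows $f_{k,m-1}$ is positive and continuous on $[r_{0},\infty)$. Hence the whole integrand is strictly positive, and the full integral is strictly positive.

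The real content is finiteness of the tail $\int_{r_{0}}^{\infty} f_{k,m-1}(r)^{-1}\d r$, with $f_{k,m-1}(r)=(\ln^{(m-1)} r)^{k}\prod_{p=0}^{m-2}\ln^{(p)} r$. The idea is to use the antiderivative. By the chain rule,
$$
\frac{\d}{\d r}\ln^{(m-1)}(r) = \prod_{p=0}^{m-2}\frac{1}{\ln^{(p)}(r)} .
$$
Consequently
$$
\frac{\d}{\d r}\left(-\frac{(\ln^{(m-1)} r)^{1-k}}{k-1}\right) = \frac{1}{f_{k,m-1}(r)} \quad \text{on } (r_{0},\infty).
$$
Since $k>1$ and $\ln^{(m-1)}(r)\to\infty$, the tail integral equals $\frac{(\ln^{(m-1)} r_{0})^{1-k}}{k-1}<\infty$. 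Equivalently, one substitutes $s=\ln^{(m-1)}(r)$ to get $\int s^{-k}\d s$. This is exactly the observation in Remark \ref{Remark 1} b).

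The main point to check carefully is the indexing. In Section \ref{IU_e^-tH} the paper uses $f_{k,m-1}$ with $\ln^{(m-1)}$ raised to the power $k$ and the product running to $m-2$; this convention differs from the first definition of $f_{k,m}$ by a shift. I would make sure the product in the derivative identity matches the product in $f$, and handle the edge case $m=1$ directly. In that case $f_{k,0}(r)=r^{k}$, and $\int_{r_{0}}^{\infty} r^{-k}\d r<\infty$ is immediate since $k>1$.
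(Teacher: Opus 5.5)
Your proposal is correct and takes the same approach as the paper. Both split the integral at $r_{0}$, handle the exponential piece as an integral over the compact interval $[\frac{1}{2}\ln(2), r_{0}]$, and evaluate the tail with the antiderivative $-(\ln^{(m-1)} r)^{1-k}/(k-1)$ of $f_{k,m-1}^{-1}$. Your constants are the correct ones: the paper's display writes $\frac{1}{1-k}$ for $1-k$ in the derivative, $f_{k,m-1}(r_{0})$ for its reciprocal in the first piece, and $(k-1)$ for $\frac{1}{k-1}$ in the tail. Your treatment of the case $\frac{1}{2}\ln(2) \geq r_{0}$, and of $f_{k,m-1}(r_{0})>0$ via hypothesis ii.), also fills small points the paper leaves implicit.
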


\begin{proof}
We use the definition of $f$ from Section \ref{IU_e^-tH}. Mind that 
$$
\frac{\d}{\d r} \left( \ln^{m-1} (r) \right)^{1-k} = \frac{1}{1-k} f_{k, m-1} (r)^{-1}
$$
holds for every $r > r_{0}$. Hence we conclude to
$$
\int_{\frac{1}{2}\ln(2)} f(r)^{-1} \d r = f_{k, m-1} ( r_{0} ) \int_{\frac{1}{2}\ln(2)}^{r_{0}} \mathrm{e}^{1-\frac{r}{r_{0}}} \d r
+ (k-1) \left( \ln^{m-1} (r_{0}) \right)^{1-k}.
$$
\end{proof}

For the following proposition please remember that $f$ and $g$ are defined in Section \ref{IU_e^-tH} and 
$\varepsilon_{t}$ is taken from Section \ref{Preliminary_Proof_IU}.

\begin{prp} \label{definition_Mt}
$\int_{2}^{\infty} \beta \bigl( \varepsilon_{t}(p) \bigr)p^{-2} \ \mathrm{d}p \in \R$
\end{prp}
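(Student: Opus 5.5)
We need to show $\int_2^\infty \beta(\varepsilon_t(p))p^{-2}\,\mathrm{d}p$ is finite. The plan is to substitute the explicit formulas for $\beta$, $\gamma$ and $\varepsilon_t$ and bound each term separately. Recall
\[
\beta(\varepsilon)=\tfrac{\varepsilon}{2}-\tfrac{n}{4}\ln\bigl(\tfrac{\varepsilon}{2}\bigr)+\gamma\bigl(\tfrac{\varepsilon}{2}\bigr)+C ,\qquad
\gamma(\varepsilon)=\sqrt{2}\,g\Bigl(\tfrac{\sqrt{2}}{d\varepsilon}\Bigr)+C ,
\]
\[
\varepsilon_t(p)=\frac{1}{\sqrt{2}\,d\,f\bigl(\tfrac12\ln p+\xi(t)\bigr)} .
\]
The integrand is continuous on $[2,\infty)$, since $f$ is continuous and positive. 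So only the behaviour as $p\to\infty$ matters, and it suffices to show that each of the four terms, divided by $p^{2}$, is integrable near infinity.

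\emph{The easy terms.} Since $f$ is increasing and positive, $\varepsilon_t(p)\le \varepsilon_t(2)$ is bounded. Hence the terms $\varepsilon_t(p)/2$ and $C$ contribute at most a constant times $\int_2^\infty p^{-2}\,\mathrm{d}p<\infty$.

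For the logarithmic term, $-\ln(\varepsilon_t(p)/2)=\ln\bigl(2\sqrt2\,d\,f(\tfrac12\ln p+\xi(t))\bigr)$. For large arguments $f=f_{k,m-1}$ grows at most polynomially in its argument: it is a product of iterated logarithms times a power of one of them, so $f(s)\le s^{2}$ for large $s$. Therefore $\ln f(\tfrac12\ln p+\xi(t))=O(\ln\ln p)$, and $\ln\ln p/p^{2}$ is integrable.

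\emph{The main term.} The key term is $\gamma(\varepsilon_t(p)/2)$, which is of order
\[
g\Bigl(\tfrac{2\sqrt2}{d\,\varepsilon_t(p)}\Bigr)=g\bigl(4d^{-1}\cdot d\, f(\tfrac12\ln p+\xi(t))\bigr)=g\bigl(4 f(\tfrac12\ln p+\xi(t))\bigr).
\]
The constants in front of $f$ do not matter beyond being fixed. Recall that $g$ is the inverse of $f\circ\ln$, so $g(f(\ln a))=a$. We need an estimate of the form $g(c\,f(s))\le \mathrm{e}^{s+o(s)}$, or at least $g(c\,f(s))\le C\mathrm{e}^{\lambda s}$ with $\lambda<2$, as $s\to\infty$. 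Put $a=g(c f(s))$, so that $f(\ln a)=c f(s)$. Write $\ln a=s+\delta$. Because $f$ is a product of iterated logarithms with $f(s)\ge s$, one has for large $s$ and $\delta\ge0$
\[
f(s+\delta)\ge \frac{s+\delta}{s}\,f(s),
\]
since every other factor of $f$ is increasing. Hence $c\,f(s)\ge (1+\delta/s)f(s)$, which gives $\delta\le (c-1)s$. That bound alone is too weak. A sharper version: $f(s+\delta)/f(s)\ge 1+\delta/s$ forces $\delta\le (c-1)s$ only crudely. In fact, for $\delta\le s$ the remaining factors change by a factor $1+o(1)$, so $f(s+\delta)/f(s)=(1+\delta/s)(1+o(1))$. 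This yields $\ln a\le c\,s(1+o(1))$, i.e. $g(cf(s))\le \mathrm{e}^{(c+o(1))s}$.

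With $s=\tfrac12\ln p+\xi(t)$ this gives $g\le C_t\,p^{(c+o(1))/2}$. This is integrable against $p^{-2}$ only if $c<2$. \emph{This is the main obstacle.} I expect to handle it in one of two ways. The first is to use the freedom in Young's inequality, replacing $f$ by $\theta f$, which rescales the constant $c$; the bounds $d\le1$ and $\sqrt2$ enter here. The second is to exploit the fact that the $\varepsilon$ factor in the Rosen inequality absorbs the constant. If neither works, I would fall back on an explicit computation: exhibit the limit $M(t)$ directly via the substitution $p=\mathrm{e}^{2(r-\xi(t))}$, which turns the integral into
\[
\int \gamma\,\mathrm{e}^{-2(r-\xi(t))}\,\mathrm{d}r ,
\]
and check that $\gamma$ grows slower than $\mathrm{e}^{2r}$. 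Combining the four term estimates then proves the proposition.
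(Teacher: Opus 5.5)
\textbf{There is a genuine gap, and it sits exactly at the term you leave open: for $m\ge 2$, and for $m=1$ with $1<k\le 2$, the integral actually diverges with the definitions as written.}

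You correctly find that $\beta(\varepsilon_t(p))$ contains $\sqrt2\,g\bigl(\tfrac{2\sqrt2}{d\,\varepsilon_t(p)}\bigr)=\sqrt2\,g\bigl(4f(\sigma)\bigr)$ with $\sigma=\tfrac12\ln p+\xi(t)$. You then obtain only $g(4f(\sigma))\le\mathrm{e}^{4\sigma}=\mathrm{e}^{4\xi(t)}p^{2}$ and list remedies without carrying any of them out. None of them can work:
\begin{itemize}
\item Replacing $f$ by $\theta f$ in Young's inequality leaves $\gamma$ unchanged. The inverse of $\theta f\circ\ln$ is $y\mapsto g(y/\theta)$, the admissible $b$ becomes $\sqrt2\theta/(d\varepsilon)$, and you recover $\sqrt2\,g(\sqrt2/(d\varepsilon))$.
\item The check that $\gamma$ grows slower than $\mathrm{e}^{2r}$ fails, because your bound is essentially sharp. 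For $m\ge2$ one has $f(s)=s\,h(s)$ with $h$ a product of iterated logarithms, so $f(2\sigma)\le 4f(\sigma)$ for large $\sigma$. Monotonicity of $g$ and $g\circ f=\exp$ then give $g(4f(\sigma))\ge g(f(2\sigma))=\mathrm{e}^{2\sigma}=\mathrm{e}^{2\xi(t)}p$. All other terms of $\beta(\varepsilon_t(p))$ are bounded below, so the integrand is at least $\sqrt2\,\mathrm{e}^{2\xi(t)}p^{-1}-O(p^{-2})$.
\item For $m=1$, $f(s)=s^k$ gives $g(4f(\sigma))=\mathrm{e}^{4^{1/k}\sigma}$, which is not integrable against $p^{-2}$ when $1<k\le 2$.
\end{itemize}
So with $\beta$, $\gamma$ and $\varepsilon_t$ exactly as defined, no estimate of the $g$-term can finish your argument in these cases.

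The paper's proof reaches finiteness only through a slip. It writes the $g$-term as $\sqrt2\,g\bigl(\tfrac{1}{\sqrt2\,d\,\varepsilon_t(p)}\bigr)=\sqrt2\,g\bigl(f(\sigma)\bigr)$ and applies $g\circ f=\exp$ to get $\mathrm{e}^{\xi(t)}p^{1/2}$. But the paper's own definition of $\gamma$ gives $\gamma(\varepsilon/2)=\sqrt2\,g\bigl(\tfrac{2\sqrt2}{d\varepsilon}\bigr)+C$, so it is off by exactly your factor $4$. Your bookkeeping is the faithful one.

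What is missing is therefore a corrected normalization, not a sharper estimate. One option:
\begin{itemize}
\item Take $\varepsilon_t(p)=\tfrac{2\sqrt2}{d\,f(\frac12\ln p+\xi(t))}$, with $\xi(t)$ and $T$ determined by $\tfrac{4\sqrt2}{d}\int_{\frac12\ln 2+\xi(t)}^{\infty}f(r)^{-1}\,\mathrm{d}r=t$.
\item The ultracontractivity argument only uses $\int_2^\infty\varepsilon_t(p)p^{-1}\,\mathrm{d}p=t$, so it is unaffected.
\item Then $g\bigl(\tfrac{2\sqrt2}{d\,\varepsilon_t(p)}\bigr)=g(f(\sigma))=\mathrm{e}^{\xi(t)}p^{1/2}$. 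Your four-term decomposition then closes exactly as the paper's does, with this term contributing $2\,\mathrm{e}^{\xi(t)}$.
\end{itemize}
Alternatively, keep the paper's constant but use $\sigma=\lambda\ln p+\xi(t)$ with $\lambda<\tfrac14$, adjusting $\xi(t)$ accordingly. Then your own bound $g(4f(\sigma))\le\mathrm{e}^{4\sigma}$ suffices, since the integrand is at most $\mathrm{e}^{4\xi(t)}p^{4\lambda-2}$.

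A minor slip: $f(s)\le s^2$ fails for $m=1$, $k>2$. All you actually use is $\ln f(s)=O(\ln s)$, which does hold.
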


\begin{proof}
First, we state that
$$
\beta \left( \varepsilon_{t}(p) \right) = 
\frac{\varepsilon_{t}(p)}{2} - \frac{n}{4} \ln( \frac{\varepsilon_{t}(p)}{2}) + \sqrt{2} \ g (\frac{1}{\sqrt{2} d \varepsilon_{t}(p)}) + C
$$ 
is given by the definition of $\beta$ in Rosen's lemma \ref{Rosen_lemma} and $\gamma$ in (\ref{gamma}).
\begin{enumerate}[i.)]
\item It is easy to see that
$
0 \leq \int_{2}^{\infty} \frac{\varepsilon_{t}(p)}{p^{2}} \d p \leq 
\int_{2}^{\infty} \frac{\varepsilon_{t}(p)}{p} \d p = t 
$ holds.
\item Furthermore, 
$
\int_{2}^{\infty} Cp^{-2} \ \mathrm{d}p = \frac{C}{2}
$
is true.
\item Now we focus on $ \ln( \frac{\varepsilon_{t}(p)}{2})$. Using the definition of $\varepsilon_{t}$ we state that
$$
 \ln \left( \frac{\varepsilon_{t}(p)}{2} \right) = K - \ln \left( f(\frac{1}{2} \ln(p) + \xi (t) ) \right)
$$
is true for a constant $K \in \R$. We use $f(r) \leq \mathrm{exp}(\mathrm{e}^{r})$ for $r \geq 1$. Then
$$
\ln \left( f(\frac{1}{2} \ln(2) + \xi (t) ) \right) \leq \ln \left( f(\frac{1}{2} \ln(p) + \xi (t) ) \right) 
\leq \mathrm{e}^{\xi (t)} p^{\frac{1}{2}}
$$
holds for $\max \{ 2, \mathrm{exp}\bigl(2(1 - \xi (t)) \bigr) \} \leq p$ due to the monotonicity of $f$. The improper 
integral of the upper bounding function is
$$
\int_{2}^{\infty} \mathrm{e}^{\xi (t)} p^{-\frac{3}{2}} \d p = \sqrt{2}\mathrm{e}^{\xi (t)}.
$$
For the lower bounding function 
$$
\int_{2}^{\infty} \frac{\ln \left( f(\frac{1}{2} \ln(2) + \xi (t) ) \right)}{p^{2}} \d p =  
\frac{\ln \left( f(\frac{1}{2} \ln(2) + \xi (t) ) \right)}{2}
$$
is true. Hence the improper integral of $\ln( \frac{\varepsilon_{t}(p)}{2}) p^{-2}$ satisfies 
$$
\frac{K}{2} - \sqrt{2}\mathrm{e}^{\xi (t)} \ \leq \ 
\int_{2}^{\infty} \ln( \frac{\varepsilon_{t}(p)}{2}) p^{-2} \d p \ \leq \ 
\frac{K}{2} -\frac{\ln \bigl( f(\frac{1}{2} \ln(2) + \xi (t) ) \bigr)}{2}. \\
$$
\item Finally we focus on the improper integral of $g (\frac{1}{\sqrt{2} d \varepsilon_{t}(p)}) p^{-2}$ over $(2,\infty)$. The equations
\begin{align*}
& \int_{2}^{\infty} g \left( \frac{1}{\sqrt{2} d \varepsilon_{t}(p)} \right) p^{-2} \d p
 = \int_{2}^{\infty} g \left( f \left( \frac{1}{2} \ln (p) + \xi (t) \right) \right) p^{-2} \d p \\
\ \\
& = \int_{2}^{\infty} \mathrm{exp} \left( \frac{1}{2} \ln (p) + \xi (t) \right) p^{-2} \d p
 = \mathrm{e}^{\xi (t)} \int_{2}^{\infty} p^{-\frac{3}{2}} \d p  = \sqrt{2} \mathrm{e}^{\xi (t)} 
\end{align*}
hold where we used $g \circ f = \mathrm{exp}$ on $\R$ which follows from $g$ being the inverse function of $f \circ \ln$ on $(0,\infty)$.
\end{enumerate}
\end{proof}

\newpage

\bibliography{references}

@article{Leinfelder81,
author = {Herbert Leinfelder and Christian G. Simader},
journal = {Mathematische Zeitschrift},
number = {1},
pages = {01--19},
publisher = {Springer},
title = {{Schrödinger Operators with Singular Magnetic Vector Potentials}},
volume = {176},
year = {1981}
}

@book{Ouhabaz05,
title = {{Analysis of Heat Equations on Domains}},
author = {El-Maati Ouhabaz},
publisher = {Princeton University Press},
address = {Princeton},
year = {2005}
}

@book{GilbargTrudinger01,
author = {David Gilbarg and Neil S. Trudinger},
title = {{Elliptic Partial Differential Equations of Second Order}},
publisher = {Springer},
address = {Berlin, Heidelberg},
year = {2001}
}

@book{Teschl09,
author = {Gerald Teschl},
title = {{Mathematical Methods in Quantum Mechanics}},
publisher = {American Mathematical Society},
address = {Providence, Rhode Island},
year = {2009}
}

@book{Davies07,
author = {Edward Brian Davies},
title = {{Heat kernels and spectral theory}},
publisher = {Cambridge University Press},
year = {2007}
}

@article{DaviesSimon84,
title = {{Ultracontractivity and the heat kernel for Schrödinger operators and Dirichlet Laplacians}},
journal = {Journal of Functional Analysis},
volume = {59},
number = {2},
pages = {335--395},
year = {1984},
author = {Edward Brian Davies and Barry Simon}
}

@article{AlziaryTakac09,
title = {{Intrinsic ultracontractivity of a Schrödinger semigroup in $\R^{N}$}},
journal = {Journal of Functional Analysis},
volume = {256},
number = {12},
pages = {4095--4127},
year = {2009},
issn = {0022-1236},
author = {Bénédicte Alziary and Peter Takáč}
}

@article{Hundertmark_Jex_Lange_2023, 
title={{Quantum systems at the brink: existence of bound states, critical potentials, and dimensionality}}, 
volume={11}, 
journal={Forum of Mathematics, Sigma}, 
author={Hundertmark, Dirk and Jex, Michal and Lange, Markus}, 
year={2023}, 
pages={e61}
}

@article{Agmon85,
author = {Shmuel Agmon},
title = {{Bounds on exponential decay of eigenfunctions of Schr{\"o}dinger operators}},
year = {1985},
publisher = {Springer Berlin Heidelberg},
pages = {1--38}
}

@article{BANUELOS1991,
title = {{Intrinsic ultracontractivity and eigenfunction estimates for Schr{\"o}dinger operators}},
journal = {Journal of Functional Analysis},
volume = {100},
number = {1},
pages = {181-206},
year = {1991},
issn = {0022-1236},
doi = {https://doi.org/10.1016/0022-1236(91)90107-G},
url = {https://www.sciencedirect.com/science/article/pii/002212369190107G},
author = {Rodrigo Bañuelos},
}
\bibliographystyle{plain}

\end{document}